
\documentclass{amsart}

%
\usepackage{mathrsfs}
\usepackage{amssymb}
\usepackage{mathtools}
\usepackage{graphicx}

%
\newtheorem{thm}{Theorem}[section]
\newtheorem{lem}[thm]{Lemma}
\newtheorem{cor}[thm]{Corollary}
\newtheorem{prop}[thm]{Proposition}
\newtheorem{example}[thm]{Example}
\newtheorem{remark}[thm]{Remark}

\theoremstyle{definition}
\newtheorem*{ack}{Acknowledgment}

%
\newcommand*{\Z}{\mathbb{Z}}   
\newcommand*{\R}{\mathbb{R}}   
\newcommand*{\st}{\,:\,}   
\newcommand*{\card}[1]{\left\lvert #1 \right\rvert}   
\newcommand*{\conv}{\operatorname{Conv}}   
\newcommand*{\width}{\operatorname{w}}   
\newcommand*{\Vol}{\operatorname{Vol}}   
\newcommand*{\maps}{\colon}   
\newcommand*{\intr}[1]{{#1}^{\circ}}   
\newcommand*{\Rec}{{\rm Rec}}

\newcommand*{\Q}{\mathbb{Q}}

\newcommand*{\ci}{\conv^\circ}
\renewcommand*{\L}{\mathcal{L}}
\newcommand*{\D}{\mathcal{D}}
\newcommand*{\Pz}{P_\Z}
\newcommand*{\Pin}{P^\circ}
\newcommand*{\Piz}{P^\circ_\Z}
\newcommand*{\Pbz}{\partial P_\Z}
\newcommand*{\V}{\mathcal{V}}
\newcommand*{\F}{\mathcal{F}}
\newcommand*{\Fz}{F_\Z}

\newcommand*{\Fiz}{F^\circ_\Z}
\newcommand*{\GL}{{\rm GL}}

\newcommand*{\Gin}{G^\circ}
\newcommand*{\Giz}{G^\circ_\Z}

\newcommand*{\Hiz}{H^\circ_\Z}
\newcommand*{\Kz}{K_\Z}
\newcommand*{\Kbz}{\partial K_\Z}
\newcommand*{\Kin}{K^\circ}
\newcommand*{\Kiz}{K^\circ_\Z}
\newcommand*{\clK}{\overline{K}}
\newcommand*{\clKz}{\overline{K}_\Z}

\newcommand*{\clKiz}{\overline{K}^\circ_\Z}
\newcommand*{\deftobe}{\coloneqq}

%

\title[Lattice width directions and Minkowski's $3^d$-theorem]{Lattice width directions and\\Minkowski's $3^d$-theorem}

\author[J.~Draisma]{Jan Draisma}
\address[Jan Draisma]{
Department of Mathematics and Computer Science\\
Technische Universiteit Eindhoven\\
P.O. Box 513, 5600 MB Eindhoven, Netherlands\\
and Centrum voor Wiskunde en Informatica, Amsterdam, The
Netherlands}
\thanks{The first author is supported by DIAMANT, an NWO
mathematics cluster.}
\email{j.draisma@tue.nl}

\author{Tyrrell B. McAllister}
\address[Tyrrell B. McAllister]{
Max Planck Institute for Mathematics\\
Vivatsgasse 7, 53111 Bonn, Germany}
\thanks{}
\email{tmcal@mpim-bonn.mpg.de}

\author{Benjamin Nill}
\address[Benjamin Nill]{
Institut f\"ur Mathematik\\
Freie Universit\"at Berlin\\
Arnimallee 3, 14195 Berlin, Germany}
\thanks{}
\email{nill@math.fu-berlin.de}

\date{Draft}


\begin{document}

\maketitle

\begin{abstract}
   We show that the number of lattice directions in which a
   $d$-dimensional convex body in $\R^d$ has minimum width is at most
   $3^d-1$, with equality only for the regular cross-polytope.  This is
   deduced from a sharpened version of the $3^d$-theorem due to Hermann
   Minkowski (22 June 1864---12 January 1909), for which we provide two
   independent proofs.
\end{abstract}

\section{Introduction}

The \emph{lattice width} of a non-empty subset $S$ of $\R^d$ is a
well-studied invariant in the geometry of numbers.  It is defined to be
the infimum of $\sup(u(S))-\inf(u(S))$ as $u$ ranges over the set of non-zero
vectors in the lattice dual to $\Z^{d} \subset \R^{d}$ for which both
$\sup(u(S))$ and $\inf(u(S))$ are finite. In the case that this set is empty, the lattice width is defined to be $\infty$. 
If the lattice width is finite, the vectors attaining this infimum are called \emph{lattice width directions}.

\bigskip
\begin{figure}[ht]
\includegraphics[height=.15\textheight]{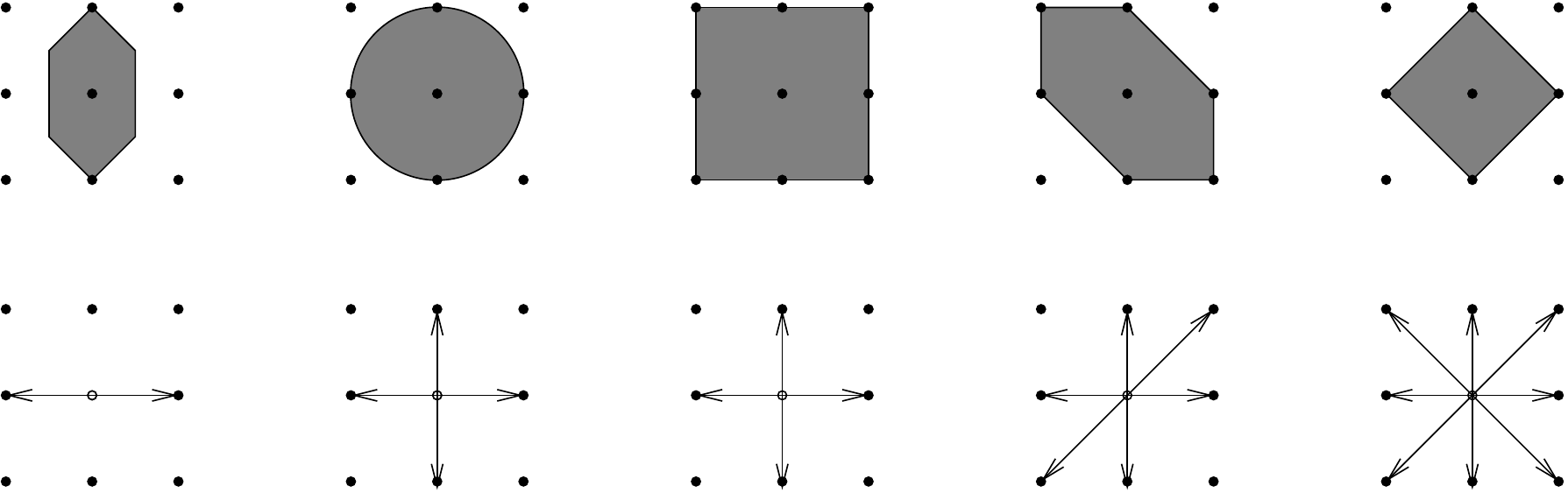}
\caption{Convex bodies in $\R^2$ and their lattice width directions}
\label{example}
\end{figure}
\smallskip

The lattice width and the set of lattice width directions is invariant under the action of matrices in $\GL_d(\Z)$ 
and under arbitrary translations of the convex body. The set of lattice width directions is also unchanged under scalings of the convex body. 
Note that in Figure \ref{example} the polygon on the right has many lattice width directions. 
This is an instance of a \emph{regular lattice cross-polytope}, which is defined as the convex
hull of $x \pm \lambda e_1, \ldots, x \pm \lambda e_d$ for some $x \in
\R^{d}$, $\lambda \in \R$, and a lattice basis $e_1, \dotsc, e_d$ of
$\Z^d$. Our main result shows that this is indeed the only extreme case.

\begin{thm}
   The number of lattice width directions of a non-empty subset $S$ of $\R^d$ 
   with $\dim(S) = d$ is at most $3^d-1$. Equality holds if and only the closure of the
   convex hull of $S$ is a regular lattice cross-polytope.
\label{main}
\end{thm}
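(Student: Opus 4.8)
The plan is to translate the enumeration of lattice width directions into a lattice-point count for a symmetric convex body, apply the sharpened $3^d$-theorem to that body, and finally reconstruct $S$ from the equality case.

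First I would reduce to the compact case. Since the lattice width and the set of width directions depend only on $P:=\overline{\conv(S)}$ and are invariant under translation, I may replace $S$ by $P$. If $P$ is unbounded with finite lattice width, then every width direction $u$ is bounded both above and below on $P$, hence vanishes on the recession cone; so all width directions lie in a proper rational subspace, and an inductive application of the bound in lower dimension gives strictly fewer than $3^d-1$ directions. Thus I may assume $P$ is a $d$-dimensional convex body. Next I would encode the width function: for every $u$ the width of $P$ in direction $u$ is $h_P(u)+h_P(-u)=h_D(u)$, where $D:=P-P$ is the centrally symmetric difference body and $h$ is the support function. Writing $w$ for the lattice width, I set $K:=w\,D^{*}$, the $w$-dilate of the polar body of $D$. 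As $h_D$ is positively homogeneous, $K=\{u:h_D(u)\le w\}$, so a nonzero $u\in\Z^d$ is a width direction exactly when $u\in\partial K$; by minimality of $w$ no nonzero lattice point lies in $\intr K$. Hence $\intr K\cap\Z^d=\{0\}$, the width directions are precisely the points of $(K\cap\Z^d)\setminus\{0\}$, and their number equals $\card{K\cap\Z^d}-1$.

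Then I would apply the sharpened $3^d$-theorem to the symmetric body $K$, whose only interior lattice point is the origin: it yields $\card{K\cap\Z^d}\le 3^d$, and therefore at most $3^d-1$ width directions, with equality forcing $K$ to be unimodularly equivalent to the cube $[-1,1]^d$. Passing to polars, equality means that $D=P-P$ is a regular lattice cross-polytope. The converse inequality is a direct computation: for $C=\conv(\pm e_1,\dots,\pm e_d)$ one has $C-C=2C$ with $h_{2C}(u)=2\max_i\card{u_i}$, so the minimum over nonzero $u\in\Z^d$ is attained exactly on $\{-1,0,1\}^d\setminus\{0\}$, giving precisely $3^d-1$ directions.

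The remaining and hardest step will be to deduce from ``$P-P$ is a regular cross-polytope'' that $P$ itself is a regular lattice cross-polytope (necessarily a translate of the one determining $D$). Writing $D=2C$ with $C=\conv(\pm\lambda e_i)$, the even part of $h_P$ equals $h_C(u)=\lambda\max_i\card{u_i}$, which is piecewise linear with gradient $\pm\lambda e_i$ on each open cone where a single coordinate strictly dominates. On such a cone the face of $D=P+(-P)$ is the single vertex $\pm 2\lambda e_i$, forcing $P$ to have a unique maximizing vertex there; since these cones cover $\R^d$ up to a measure-zero set, $P=\conv\{p_1^{\pm},\dots,p_d^{\pm}\}$ has at most $2d$ vertices, paired by $p_i^{+}-p_i^{-}=2\lambda e_i$. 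Finally, evaluating $h_P(\epsilon)+h_P(-\epsilon)=2\lambda$ along the diagonal directions $\epsilon\in\{\pm1\}^d$ forces the midpoints $\tfrac12(p_i^{+}+p_i^{-})$ to coincide in a common center, so that $P$ is the regular cross-polytope with vertices that center $\pm\lambda e_i$. I expect this recovery of a body from the rigidity of its difference body—rather than the lattice-point count—to be the main obstacle, since the difference body generally fails to determine the body, and it is the special symmetric geometry of the cross-polytope that makes the reconstruction possible.
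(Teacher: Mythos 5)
Your proposal is correct, and its skeleton matches the paper's: encode the lattice width directions as the nonzero lattice points of a centrally symmetric convex body whose only interior lattice point is the origin, apply Theorem~\ref{mink}, and reconstruct $S$ in the equality case. The two halves are nonetheless carried out differently. For the counting step the paper works with $\conv S'$ itself (Theorem~\ref{thm:DualofS}, proved via Carath\'eodory and convexity of $\width(S,\cdot)$), whereas you use the width ball $K=\{u:\width(S,u)\le\width(S)\}=w\,(P-P)^{*}$, which has the required properties essentially by definition; the price is that $K$ only exists once you have reduced to the compact, full-dimensional case, so the degenerate reduction must be done with care (the paper's Lemma~\ref{projection} and Proposition~\ref{width-prop}): your sketch identifies the proper subspace as $\Rec(P)^{\perp}$, which need not be rational, and the correct object is $\L(S)_\R$ together with the projection along its common kernel. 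For the equality case the paper reads the cube structure of $\conv S'$ directly into coordinate inequalities for extremal points $p_i,q_i$, while you pass to polars to conclude that $P-P$ is a regular cross-polytope and then reconstruct $P$ from its difference body; this is a genuinely different, more conceptual packaging, and the one step that needs an explicit argument is ``forcing $P$ to have a unique maximizing vertex there'': you should note that $h_P(u)+h_P(-u)$ is linear on each open cone where one coordinate strictly dominates, and that a convex summand of a linear function on an open convex set is affine, so $h_P$ is linear there and the maximizer is constant on the cone. With that supplied, your center-matching along the diagonal directions $\{\pm1\}^d$ is correct and is in substance the same computation as the paper's verification that all $t_v=0$; indeed the two routes converge, since in the equality case $\conv S'$ and your $K$ both equal the standard lattice cube.
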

   
We prove this result in Section 2. The proof relies on the following
strengthening of a theorem of Minkowski about centrally-symmetric
convex sets with only one interior lattice point. Denote by $\Kz$
the set of lattice points in the convex set $K$, and by $\Kiz$ the
set of lattice points in the relative interior of $K$.  While the most
well-known lattice point theorem by Minkowski gives an upper bound on
the volume of a centrally-symmetric convex set with only one interior
lattice point, the result we are interested in yields an upper bound on
the number of lattice points.  We say that $K$ is a \emph{standard lattice cube}
if there is a lattice basis $e_1, \dotsc, e_d$ of $\Z^d$ such that $K$
is the convex hull of $\pm e_1 \pm \dotsb \pm e_d$.

\begin{thm}
   Let $K \subseteq \R^d$ be a centrally-symmetric
   convex set.  If $\Kiz = \{0\}$, then $\card{\Kz} \leq 3^d$, with
   equality if and only if $K$ is a standard lattice cube.
\label{mink}
\end{thm}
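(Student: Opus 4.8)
The plan is to establish the inequality by Minkowski's classical mod-$3$ pigeonhole argument and then to concentrate the real work on the equality case. I may assume $K$ is full-dimensional: otherwise $\card{\Kz}\le 3^{\dim K}<3^d$ and equality cannot hold. Full-dimensionality and $K=-K$ give $0\in\intr{K}$, and since $\Kiz=\{0\}$ is finite, $K$ is bounded (an unbounded symmetric convex set contains a line through $0$, whose open thickening inside $\intr{K}$ would contain infinitely many lattice points). For the bound, consider reduction $\pi\maps\Z^d\to(\Z/3\Z)^d$. If $\card{\Kz}>3^d$, two distinct points $a,b\in\Kz$ satisfy $a\equiv b\pmod 3$, so $w\deftobe(a-b)/3\in\Z^d\setminus\{0\}$; writing $w=\tfrac23\cdot\tfrac{a-b}{2}+\tfrac13\cdot 0$ with $\tfrac{a-b}{2}=\tfrac{a+(-b)}{2}\in K$ and $0\in\intr{K}$ exhibits $w\in\intr{K}$, contradicting $\Kiz=\{0\}$. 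Hence $\card{\Kz}\le 3^d$.

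Now suppose $\card{\Kz}=3^d$, so $\pi|_{\Kz}$ is a bijection onto $(\Z/3\Z)^d$ and every nonzero lattice point of $K$ lies on $\bnd{K}$. I would first pass to $P\deftobe\conv(\Kz)\subseteq K$, which loses nothing: $P$ is a centrally symmetric lattice polytope with $P_\Z=\Kz$, and $\intr{P}\subseteq\intr{K}$ forces $P^\circ_\Z=\{0\}$ with $0\in\intr{P}$. The same pigeonhole, applied inside a single lattice hyperplane, bounds slices: for primitive $u$ and any $k$, distinct lattice points of $P\cap\{u=k\}$ have distinct residues, and those residues form a fibre of the nonzero functional $u\bmod 3$ on $(\Z/3\Z)^d$, so $\card{(P\cap\{u=k\})_\Z}\le 3^{d-1}$.

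The heart of the argument is an induction on $d$, slicing $P$ by the hyperplanes $\{u=k\}$ for a primitive $u$; the case $d=1$ forces $K=[-1,1]$. The central slice $P\cap\{u=0\}$ is centrally symmetric in the rank-$(d-1)$ lattice $\Z^d\cap\{u=0\}$, and a relative-interior lattice point of it is an interior point of $P$; hence its unique relative-interior lattice point is $0$, and by induction it carries at most $3^{d-1}$ lattice points, with equality only for a $(d-1)$-dimensional standard lattice cube. The task is then to force exactly three full, aligned slices: because all nonzero lattice points lie on $\bnd{P}$, any slice strictly between the two extreme ones is \emph{hollow} (a relative-interior lattice point of it would be a nonzero interior point of $P$), and combining this hollowness with the per-slice bound $3^{d-1}$ and $\card{\Kz}=3^d$ should confine the nonempty slices to levels $\{-1,0,1\}$, each a translate of one fixed $(d-1)$-cube, which stack to a $d$-cube. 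I expect the \emph{main obstacle} to lie precisely here: producing a primitive direction $u$ that simultaneously has width $2$ and a full central slice, and excluding ``taller'' configurations (a facet at lattice height $\ge 2$ with hollow intermediate slices) by playing the hollowness constraint against the residue count. Granting this, $P$ is $\GL_d(\Z)$-equivalent to $[-1,1]^d$.

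Finally I would upgrade $P=[-1,1]^d$ back to $K$. If $K\supsetneq P$, choose $p\in K$ with (after applying central symmetry and relabelling coordinates) $p_1>1$. The cross-sections of $\conv\bigl((\{1\}\times[-1,1]^{d-1})\cup\{p\}\bigr)$ at levels $x_1=1+\epsilon$ recenter on $0$ and shrink to $[-1,1]^{d-1}$ as $\epsilon\to 0$, so a full neighbourhood of the facet centre $e_1=(1,0,\dots,0)$ lies in $K$, the side $x_1\le 1$ being supplied by $P$. Thus $e_1\in\intr{K}$ is a nonzero interior lattice point, a contradiction, forcing $K=P=[-1,1]^d$, a standard lattice cube. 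The converse is immediate: a standard lattice cube has exactly the $3^d$ lattice points $\{-1,0,1\}^d$ and unique interior lattice point $0$.
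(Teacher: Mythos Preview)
Your outline is sound at both ends: the mod-$3$ injectivity argument for the inequality, the reduction to the lattice polytope $P=\conv(\Kz)$, and the final step upgrading $P=[-1,1]^d$ back to $K$ all match the paper's Minkowski-style proof in Section~\ref{sec:MinkowskiStyleProofOfMink}. The genuine gap is exactly where you flag it: you write ``Granting this'' for the step that confines the lattice points of $P$ to three parallel hyperplane slices, but you supply neither a choice of the primitive direction $u$ nor a mechanism that rules out taller configurations. Your two ingredients---hollowness of intermediate slices and the per-slice bound $3^{d-1}$---do not combine to force only three slices: a hollow $(d-1)$-dimensional polytope can carry arbitrarily many boundary lattice points, so five or more hollow slices with fewer than $3^{d-1}$ points each, together with a full central slice, are not excluded by counting alone. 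Nor does anything yet single out a $u$ for which the central slice is full.

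The paper fills this gap with real work. From the bijection $\Pz\to(\Z/3\Z)^d$ one extracts a key lemma (Lemma~\ref{crux}): for any $x,y\in\Pz$ there is a \emph{unique} $z\in\Pz$ with $(x+y+z)/3\in\Z^d$, and this barycentre again lies in $\Pz$. Using this repeatedly one climbs through the face lattice, producing for each $k=1,\ldots,d-1$ a proper face of dimension $\ge k$ with a lattice point in its relative interior, and hence a facet $F$ with $\Fiz\ne\emptyset$ (Corollary~\ref{facet}). The direction you were seeking is then the facet normal $u_F$. A second application of the same lemma gives the ``addition'' step (Proposition~\ref{add}): for $x\in\Fiz$ one has $x+(\Pz\setminus\Fz)\subseteq\Pz$, which forces $\Pz=\Fz\sqcup(\Pz\cap u_F^\perp)\sqcup(-\Fz)$ with the three layers in bijection via translation by $x$ (Corollary~\ref{layer}), so that $P$ is a prism over $F$. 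Only then does the induction on $d$ run. This chain of lemmas is the substance your proposal is missing; without it the slicing strategy never gets off the ground.
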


We remark that there are centrally-symmetric compact convex 
sets $K$ with $\Kiz = \{0\}$ that are {\em not} contained in a standard lattice cube, 
see Remark 4.9 in \cite{Nill06b}.

\smallskip
The upper bound in Theorem~\ref{mink} was proved by Minkowski \cite[\S 31, p.79]{Mink10};
a reference in English is \cite[Art.\ 45 p.149]{Hanc64}. We give two
proofs for the fact that only the standard lattice cube attains the upper
bound. First, in Section \ref{sec:GeometricProofOfMink} we use a
geometric argument due to Groemer \cite{Groe61}. Second, in Section
\ref{sec:MinkowskiStyleProofOfMink}, we give a self-contained proof.
The latter proof is based on congruences modulo $3$, in the line of
Minkowski's original approach.

\begin{ack}
   We thank Martin Henk for giving reference to \cite{Groe61}, and Josef
   Schicho for telling us about the $2$-dimensional case of
   Theorem \ref{main}.
\end{ack}

\section{Proof of Theorem \ref{main}}

Let a non-empty set $S \subseteq \R^{d}$ be given.  We define 
$\D(S)$ to be the set of vectors $v \in (\R^{d})^{*}$ such that $\sup
v(S) < \infty$ and $\inf v(S) > - \infty$. The set $\D(S)$ is easily
seen to be a linear subspace of $\R^{d}$. For $v \in \D(S)$, we define
the \emph{width of $S$ in direction $v$} to be
\begin{equation*}
   \width(S, v) := \sup v(S) - \inf v(S).
\end{equation*}
Note that the width does not change if we replace $S$ by the closure of
its convex hull. For $S$ convex the map sending $v$ to the first term
is sometimes referred to as the {\em St\"utzfunktion} of $S$. Now let
$\L(S)$ be the intersection of $\D(S)$ with the lattice $(\Z^{d})^*$. The
\emph{lattice width} of $S$ is given by
\begin{equation*}
   \width(S) = \inf \{ \width(S,v) \st v \in \L(S) \setminus \{0\} \}.
\end{equation*}
The set over which the infimum is taken may be empty, in which case
we set the lattice width equal to $\infty$. 
The set of {\em lattice width directions} of $S$ is defined as 
\begin{equation*}
   S' := \{ v \in \L(S)\setminus\{0\} \st \width(S, v) = \width(S) \}.
\end{equation*}

We now show how $\width(S)$ and $S'$ can be determined from a certain
compact convex set related to $S$. Let $e$ be the rank of $\L(S)$. Denote
by $\L(S)_\R$ the vector subspace of $(\R^d)^*$ spanned by $\L(S)$;
this is an $e$-dimensional, potentially strict, subspace of $\D(S)$. Let
$V \subseteq \R^d$ be the subspace of $\R^d$ where all elements of
$\L(S)_\R$ are zero, and let $\pi$ denote the natural projection $\R^d
\to \R^d/V$. This $\pi$ maps $\Z^d$ to a lattice $\Lambda$ of full
rank $e$ in the $e$-dimensional space $\R^d/V$, and the lattice dual to
this lattice is canonically isomorphic to $\L(S) \subseteq \L(S)_\R$.
The following lemma is straightforward, and reduces the study of
lattice width and lattice width directions to the case where $S$ has
$\D(S)=\R^d$, i.e., to {\em bounded} sets $S$.

\begin{lem}
The lattice width of $S$ relative to $\Z^d$ is equal to that of
$\pi(S)$ relative to $\Lambda$. Similarly, $S'$ equals $\pi(S)'$ under
the identification $\Lambda^*=\L(S)$.
\label{projection}
\end{lem}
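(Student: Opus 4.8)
The plan is to show that every functional relevant to the lattice width of $S$ descends through $\pi$ to a functional on $\R^d/V$, and that this descent preserves widths and matches up the two lattice structures. First I would observe that, by the very definition of $V$ as the common zero locus of $\L(S)_\R$, every $v \in \L(S)_\R$ vanishes on $V$ and therefore factors uniquely as $v = \overline{v} \circ \pi$ for some $\overline{v} \in (\R^d/V)^*$. Since $\pi$ is surjective, the assignment $v \mapsto \overline{v}$ is a linear isomorphism $\L(S)_\R \to (\R^d/V)^*$, and under the canonical identification $\Lambda^* = \L(S)$ recorded above it restricts to a bijection between $\L(S)$ and $\Lambda^*$. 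For any such $v$ we have $v(S) = \overline{v}(\pi(S))$ as subsets of $\R$, whence $\sup v(S) = \sup \overline{v}(\pi(S))$ and $\inf v(S) = \inf \overline{v}(\pi(S))$; in particular $\width(S, v) = \width(\pi(S), \overline{v})$.

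The one point that requires care is that the two lattice widths are infima over genuinely corresponding index sets. Concretely, I must check that $\L(\pi(S)) = \Lambda^*$, i.e., that every nonzero $\overline{v} \in \Lambda^*$ lies in $\D(\pi(S))$. This is exactly where the inclusion $\L(S)_\R \subseteq \D(S)$ noted before the lemma is used: the preimage $v = \overline{v} \circ \pi$ lies in $\L(S)_\R \subseteq \D(S)$, so $\sup v(S)$ and $\inf v(S)$ are finite, and by the equalities above the same holds for $\overline{v}$ on $\pi(S)$. Thus $\D(\pi(S))$ contains all of $(\R^d/V)^*$, so $\pi(S)$ is in fact bounded and $\L(\pi(S)) = \Lambda^*$; this is the promised reduction to the bounded case.

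With these two ingredients the lemma follows by unwinding definitions. Taking the infimum of $\width(S, v) = \width(\pi(S), \overline{v})$ over $v \in \L(S) \setminus \{0\}$, which corresponds bijectively to $\overline{v} \in \Lambda^* \setminus \{0\} = \L(\pi(S)) \setminus \{0\}$, yields $\width(S) = \width(\pi(S))$, both being $\infty$ in the degenerate case $e = 0$ where $\Lambda$ is trivial. Moreover $v$ attains the infimum for $S$ if and only if $\overline{v}$ attains it for $\pi(S)$, so $v \in S'$ exactly when $\overline{v} \in \pi(S)'$, giving $S' = \pi(S)'$ under $\Lambda^* = \L(S)$. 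I expect the only genuine obstacle to be the verification in the second paragraph: one must resist assuming $\D(S) = \L(S)_\R$, since $S$ may be bounded in irrational directions that carry no lattice functionals, and instead use only the inclusion $\L(S)_\R \subseteq \D(S)$ to guarantee finiteness after projecting.
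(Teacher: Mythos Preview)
The paper gives no proof, simply calling the lemma ``straightforward.'' Your argument correctly fills in the details, including the one genuinely subtle point that $\L(S)_\R \subseteq \D(S)$ ensures $\D(\pi(S)) = (\R^d/V)^*$, so the two infima are taken over corresponding index sets.
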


Furthermore, if $S$ is bounded, then we can make it compact and convex
by passing to the closure of its convex hull. 

\begin{example}{\rm Let $S$ be given as $\R_{\geq 0}\,(1,\sqrt{2},0) + [0,1]\, (0,0,1) 
\subseteq \R^3$. Then $S$ is convex, unbounded, and contained in an affine hyperplane. 
Identify $\Z^3$ with $(\Z^3)^*$ via the usual scalar product. Then $\D(S) = \R (-\sqrt{2},1,0) + 
\R (0,0,1) \supsetneq \L(S) = \Z (0,0,1)$. In particular $\width(S) = 1$ and 
$S' = \pm (0,0,1)$. In the notation of Lemma \ref{projection}, we have $\pi(S) = [0,1] 
\subseteq \Lambda \cong \Z$.}
\end{example}

Note that $\conv S$ is not compact in the previous example.

\begin{prop}
Let $S \subseteq \R^d$ be a non-empty, compact and convex subset. Then $\width(S) < \infty$ if and only if $d > 0$. In this case:
\begin{enumerate}
\item If $\dim(S) < d$, then $\width(S) = 0$. Moreover, $S' \not=\emptyset$ if and only if $S$ is contained in an affine 
hyperplane with a rational defining vector.
\item If $\dim(S) = d$, then $\width(S) > 0$ and $S' \not=\emptyset$.
\end{enumerate}
\label{width-prop}
\end{prop}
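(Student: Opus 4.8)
The plan is to exploit compactness to reduce everything to linear-algebraic and lattice-density considerations. Since $S$ is compact, every linear functional attains a finite supremum and infimum on $S$, so $\D(S) = (\R^d)^*$ and $\L(S) = (\Z^d)^*$ has full rank $d$. When $d = 0$ the set $\L(S) \setminus \{0\}$ is empty and $\width(S) = \infty$; when $d > 0$ it is non-empty and each $\width(S, v)$ is finite, so $\width(S) < \infty$. This settles the first claim, and for the remainder I assume $d > 0$. The organizing observation is that $\width(S, v)$ depends on $v$ only through its restriction to the linear span $W$ of $S - S$ (the linear part of the affine hull of $S$): fixing a base point $p \in S$, one has $\width(S, v) = \sup v(S - p) - \inf v(S - p)$ with $S - p \subseteq W$. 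In particular $\width(S, v) = 0$ if and only if $v$ lies in $W^\perp$, the annihilator of $W$ in $(\R^d)^*$.

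For part (1), suppose $\dim(S) < d$, so $\dim W < d$ and $\dim W^\perp \geq 1$. I consider the restriction map $r \maps (\R^d)^* \to W^*$, whose kernel is $W^\perp$, and examine the subgroup $r((\Z^d)^*) \subseteq W^*$. Its rank equals $d - k$, where $k$ is the rank of $W^\perp \cap (\Z^d)^*$; since $k \leq \dim W^\perp = d - \dim W$, this rank is at least $\dim W = \dim W^*$. I then split according to whether $W^\perp$ contains a nonzero lattice vector. If it does, that vector realizes $\width(S, v) = 0$, so $\width(S) = 0$ is attained and $S' \neq \emptyset$. If it does not, then $k = 0$ and $r((\Z^d)^*)$ has rank $d > \dim W^*$, hence is a non-discrete subgroup of $W^*$ and accumulates at $0$; this produces lattice functionals of arbitrarily small but strictly positive width, so again $\width(S) = 0$, now with $S' = \emptyset$. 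Finally, observing that a nonzero $v \in W^\perp \cap (\Z^d)^*$ is exactly a rational defining vector of an affine hyperplane containing $S$ identifies the condition $S' \neq \emptyset$ with the asserted rationality.

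For part (2), suppose $\dim(S) = d$, so $S$ contains a ball $B(x_0, \varepsilon)$ with $\varepsilon > 0$. For any nonzero $v \in (\R^d)^*$, evaluating $v$ on this ball gives $\width(S, v) \geq 2\varepsilon \lVert v \rVert$, which is strictly positive; since the minimal norm of a nonzero vector of $(\Z^d)^*$ is a positive constant, $\width(S) > 0$ follows. The same inequality bounds $\lVert v \rVert$ whenever $\width(S, v)$ is within, say, $1$ of $\width(S)$, leaving only finitely many candidate functionals, over which the minimum is attained; hence $S' \neq \emptyset$.

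The only delicate point throughout is the non-discreteness step in part (1): one must use the rank comparison $d - k \geq \dim W^*$ carefully, together with the structure theorem for finitely generated subgroups of $\R^n$, to separate the rational case (infimum attained) from the irrational case (infimum zero but unattained). Everything else reduces to the continuity of $v \mapsto \width(S, v)$ in the restriction $r(v)$ and standard lattice estimates.
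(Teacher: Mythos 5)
Your proof is correct, and your part (2) is essentially identical to the paper's (ball inside $S$ forces $\width(S,v)$ to grow linearly in $\lVert v\rVert$, so the infimum is over finitely many lattice vectors and is positive and attained). The genuine divergence is in part (1), in the case where no hyperplane containing $S$ has a rational defining vector. Both you and the paper reduce to the same dichotomy --- does the annihilator $W^\perp$ of the linear span $W$ of $S-S$ contain a nonzero lattice vector? --- but you prove $\width(S)=0$ in the irrational case by restricting the dual lattice to $W^*$ and using that a subgroup of a real vector space whose rank exceeds the dimension cannot be discrete: since the restriction map is injective on $(\Z^d)^*$ when $W^\perp\cap(\Z^d)^*=\{0\}$, its image has rank $d>\dim W^*$, hence accumulates at $0$, yielding nonzero lattice functionals of arbitrarily small (but, as you note, strictly positive) width. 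The paper instead fixes $\epsilon>0$, forms the slab $Z=\{v\in(\R^d)^* : v(S)\subseteq(-\epsilon/2,\epsilon/2)\}$, checks that it is convex, centrally symmetric, and of infinite volume (it contains a ball and is invariant under translation by multiples of the defining vector $w$), and applies Minkowski's first lattice point theorem to produce a nonzero lattice vector of width less than $\epsilon$. The two routes deliver the same conclusion: the paper's is thematically consistent with the rest of the article, which already cites Minkowski's theorem, whereas yours substitutes the elementary structure theory of discrete subgroups of $\R^n$ and has the small advantage of making explicit why the infimum is unattained (injectivity of the restriction on the lattice forces every nonzero lattice functional to have positive width). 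Your treatment of the trivial case $d=0$ is also a harmless addition that the paper passes over.
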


\begin{proof}
(1) After translating $S$, which does not effect $\width(S)$ or $S'$, we
may assume that $S$ lies in the hyperplane through the origin defined by
a non-zero element $w \in (\R^d)^*$. If $w$ can be chosen in the lattice, 
then $\width(S) \leq \width(S,w)=0$ and $w \in S'$ and we are done. If
not, then the following argument shows that $\width(S)=0$ still holds,
while $S'=\emptyset$. Fix $\epsilon>0$ and consider the set
\[ Z:=\{v \in (\R^d)^* \mid v(S) \subseteq (-\epsilon/2,+\epsilon/2)\}. \]
By compactness of $S$ this set contains a $d$-dimensional ball $B$
centered at the origin. Moreover, $Z$ is stable under translation
over multiples of $w$. These facts imply that $Z$ has infinite
volume. Moreover, $Z$ is centrally-symmetric and convex since the interval
$(-\epsilon/2,+\epsilon/2)$ is. By Minkowski's well-known lattice point
theorem \cite{Mink10, Hanc64} $Z$ contains a non-zero lattice point
$v$. But then $\width(S,v)<\epsilon$.

(2) Since $S$ is compact, $\D(S) = \R^d$ and $\width(S) < \infty$. 
Since $S$ contains a ball $B$ of dimension $d$, it 
is clear that $\width(S,v)\geq \width(B,v)\geq \width(S)+1$ for $v$ 
outside some large ball in $(\R^d)^*$. This large ball has 
only finitely many lattice points, hence, $\width(S)$ is attained by one of 
these lattice points. In particular, $S' \not= \emptyset$. Since $S$ is not 
contained in an affine hyperplane, we have $\width(S) > 0$.
\end{proof}

\begin{example}{\rm Let us illustrate the previous proposition for $S = \{(0,0),(1,\sqrt{2})\} \subset \R^2$. 
Then there exist $a,b \in \Z\setminus\{0\}$ such that 
$\sqrt{2} \approx \frac{a}{b}$. Therefore, for $v := (a,-b)$ we see $\width(S,v) \approx 0$. 
Hence, $\width(S) = 0$. However, $S'=\emptyset$, since $\L(S) \cap S^\perp = \{0\}$. 
Moreover, note that for $S = \R_{\geq 0} (1,\sqrt{2})$ we have $\width(S) = \infty$.
}
\end{example}

Combining Lemma~\ref{projection} and Proposition~\ref{width-prop} yields the following 
observation.

\begin{cor}
Let $\emptyset \not= S \subseteq \R^d$ with $\dim(S) = d$. Then $\width(S) > 0$.
\label{observ}
\end{cor}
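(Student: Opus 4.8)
The plan is to combine Lemma~\ref{projection} and Proposition~\ref{width-prop} exactly as advertised. First I would invoke Lemma~\ref{projection} to replace $S$ by its image $\pi(S) \subseteq \R^d/V$, since this preserves the lattice width, $\width(S) = \width(\pi(S))$, the latter computed relative to $\Lambda$. The whole point of this reduction is that, although $S$ itself may be unbounded, the set $\pi(S)$ is always \emph{bounded}: by construction $(\R^d/V)^*$ is canonically identified with $\L(S)_\R$, so every linear functional on $\R^d/V$ lies in $\L(S)_\R \subseteq \D(S)$ (using that $\D(S)$ is a linear subspace containing $\L(S)$) and hence has finite width on $S$, and therefore on $\pi(S)$. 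Since the directional widths of a set are unchanged on passing to the closure of its convex hull, I may then set $K := \al{\conv \pi(S)}$, a non-empty compact convex subset of $\R^d/V$ with $\width(K) = \width(\pi(S)) = \width(S)$.

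Next I would verify the dimension and set up the hypotheses of Proposition~\ref{width-prop}. Because $\dim(S) = d$, the set $S$ affinely spans $\R^d$, and applying the linear surjection $\pi$ shows that $\pi(S)$, and hence $K$, affinely spans $\R^d/V$; thus $K$ is full-dimensional in the $e$-dimensional space $\R^d/V$, where $e$ is the rank of $\L(S)$. Choosing a basis of $\Lambda$ identifies $(\R^d/V, \Lambda)$ with $(\R^e, \Z^e)$, and since lattice width is invariant under such a lattice isomorphism, Proposition~\ref{width-prop} becomes directly applicable to $K$.

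Finally I would split on $e$. If $e = 0$, then $\L(S)\setminus\{0\}$ is empty, so $\width(S) = \infty > 0$ by the convention for the empty infimum. If $e \geq 1$, then $K$ is a non-empty compact convex set of full dimension $e$ in the $e$-dimensional space $\R^e$ with $e > 0$, so Proposition~\ref{width-prop}(2) gives $\width(K) > 0$; transporting this back through the equalities above yields $\width(S) > 0$.

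I expect the only genuinely delicate point to be the boundedness of $\pi(S)$, which rests on the identification $(\R^d/V)^* = \L(S)_\R$ together with the fact that $\D(S)$ is a linear subspace containing $\L(S)$. Once that is in hand, the dimension count and the passage to the closed convex hull are routine, and the degenerate case $e = 0$ (which genuinely occurs, e.g.\ for a full-dimensional half-space) is the only caveat worth flagging explicitly.
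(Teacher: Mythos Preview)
Your proposal is correct and follows exactly the route the paper indicates: the corollary is stated as an immediate consequence of Lemma~\ref{projection} and Proposition~\ref{width-prop}, and you have simply spelled out the intermediate steps (boundedness of $\pi(S)$, passage to the closed convex hull, the dimension count, and the degenerate case $e=0$) that the paper leaves implicit. There is nothing to add.
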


When $S$ is a full-dimensional and compact convex set, observe that
lattice width directions are necessarily primitive lattice vectors---that
is, they are not properly divisible by an integer.  The following result
shows that even more is true.

\begin{thm}
\label{thm:DualofS}
   Let $S$ be a subset of $\R^{d}$ such that $0 < \width(S) < \infty$. Then $\conv S'$ is a non-empty, 
   convex, centrally-symmetric set that contains no lattice point other than the origin in its relative
   interior. Moreover, the lattice points on the boundary of $\conv S'$
   are precisely the elements of $S'$.
\end{thm}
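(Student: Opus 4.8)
The plan is to prove each of the four assertions in turn, reducing everything to the case where $S$ is bounded via Lemma~\ref{projection}, and then working with the supporting‑function description of width. Throughout I write $w := \width(S)$ for the (positive, finite) lattice width.

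First I would establish non-emptiness and central symmetry. By Lemma~\ref{projection} we may replace $S$ by $\pi(S)$ and assume $\D(S) = \R^d$, i.e.\ that $S$ is bounded; passing to $\al{\conv S}$ changes neither $\width(S,v)$ nor $S'$. Now $S$ is compact, convex, and full-dimensional in $\R^e$ (with $e = \dim \L(S)$), so Proposition~\ref{width-prop}(2) gives $S' \neq \emptyset$, hence $\conv S' \neq \emptyset$, and convexity is automatic. For central symmetry, observe that $\width(S,v) = \width(S,-v)$ directly from the definition $\width(S,v) = \sup v(S) - \inf v(S)$, since $\sup(-v)(S) = -\inf v(S)$ and vice versa. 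Thus $v \in S'$ iff $-v \in S'$, so $S'$ is centrally symmetric about the origin, and therefore so is $\conv S'$.

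The heart of the argument is the statement about lattice points. The key observation I would exploit is \emph{sublinearity and convexity of the support functional}: the map $h(v) := \sup v(S) - \inf v(S) = \width(S,v)$ is a convex, positively homogeneous function of $v$ (it is a difference of the support function of $S$ and that of $-S$, equivalently the support function of the symmetrization $S - S$). Concretely, for the difference body $D := S - S = \{s - t : s,t \in S\}$, one has $\width(S,v) = \sup v(D) = h_D(v)$, the support function of the centrally-symmetric convex body $D$. Hence $S' = \{v \in \L(S)\setminus\{0\} : h_D(v) = w\}$, the lattice points lying on the boundary of the sublevel set $\{v : h_D(v) \le w\}$, which is precisely $w \cdot D^{*}$, the polar body of $D$ scaled by $w$. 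This is the structural fact I would drive toward: $\conv S'$ sits inside the convex body $C := \{v : h_D(v) \le w\}$, whose \emph{only} lattice points on its boundary are exactly the elements of $S'$ (by definition of $w$ as the minimum), and whose interior contains no nonzero lattice point at all, since any nonzero lattice vector $v$ with $h_D(v) < w$ would contradict minimality of $w$.

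From there the two remaining claims follow cleanly. For the relative interior: any nonzero lattice point $v$ of $\conv S'$ lies in the convex body $C$ (as $\conv S' \subseteq C$), and $h_D(v) \le w$ forces $h_D(v) = w$ by minimality, so $v$ lies on $\bnd C$, hence on $\bnd(\conv S')$; thus the only possible lattice point in the relative interior of $\conv S'$ is the origin. For the boundary claim, the inclusion $S' \subseteq \bnd(\conv S')$ follows because each $v \in S'$ is an extreme-width direction lying on $\bnd C$ and $\conv S' \subseteq C$; conversely any lattice point $u$ on $\bnd(\conv S')$ is a convex combination of elements of $S'$, all of which have $h_D = w$, so by convexity $h_D(u) \le w$, and minimality forces $h_D(u) = w$, i.e.\ $u \in S'$. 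I expect the main obstacle to be the careful bookkeeping when $\L(S)$ has rank $e < d$: one must confirm that ``relative interior'' and the polarity argument are carried out inside the correct $e$-dimensional rational subspace $\L(S)_\R$, with lattice $\Lambda^{*} \cong \L(S)$, so that Lemma~\ref{projection} legitimately transports the full-dimensional bounded case back to the general $S$.
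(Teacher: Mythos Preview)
Your proposal is correct and follows essentially the same route as the paper: both rest on the convexity and positive homogeneity of $v \mapsto \width(S,v)$, together with minimality of $w$, after reducing to the bounded case via Lemma~\ref{projection} and Proposition~\ref{width-prop}. Your packaging via the difference body $D = S - S$ and the polar-type body $C = \{v : h_D(v) \le w\}$ is a pleasant geometric reformulation (it makes the role of the ``dual'' body explicit), but it does not change the mechanism---the paper simply applies convexity and homogeneity of $\width(S,\cdot)$ directly, and your step ``$v \in \partial C$, hence $v \in \partial(\conv S')$'' unwinds to exactly the paper's observation that $(1+\epsilon)v$ would otherwise lie in $\conv S' \subseteq C$ while having $h_D((1+\epsilon)v) > w$.
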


\begin{proof}
   Convexity and central-symmetry are immediate from the definition
   of $S'$. Non-emptiness follows from Lemma~\ref{projection} and Proposition~\ref{width-prop}.

   It is easy to verify that $\width(S, -) \maps \D(S) \to \R$ is
   a convex homogeneous function of degree $1$.  Suppose that $v \in
   \conv S'$.  By Carath\'eodory's theorem, there exist $v_{1}, \dotsc,
   v_{n} \in S'$ 
   and coefficients $0 \le \lambda_{1},
   \dotsc, \lambda_{n}$ with $\lambda_{1} + \dotsb + \lambda_{n} =
   1$ such that $v = \lambda_{1} v_{1} + \dotsb + \lambda_{n} v_{n}$.
   Hence, \begin{equation*}
      \width(S, v) \le \lambda_{1} \width(S, v_{1}) + \dotsb + \lambda_{n}
      \width(S, v_{n}) = \width(S).
   \end{equation*} In particular, if $v$ is a nonzero lattice point,
   so that $\width(S, v) \ge \width(S)$, then we have $\width(S,v) =
   \width(S)$, so that $v \in S'$.  Therefore, the non-zero lattice
   points in $\conv S'$ are precisely the elements of $S'$.

   Moreover, we cannot have $v$ in the relative interior of $\conv S'$,
   since, as shown above, that would imply that, for some $\epsilon > 0$,
   $\width(S, (1+\epsilon)v) \le \width(S)$, contradicting the fact that
      $\width(S, (1 + \epsilon)v) = (1 + \epsilon) \width(S, v) >
      \width(S)$ by minimality.
\end{proof}

\begin{remark}{\rm As we have seen, the notion of lattice width directions 
is quite subtle. Here is an important case where everything works out nice. Let $S$ be a {\em rational polyhedron}, 
namely, a convex set given by finitely many linear inequalities
\[S = \{x \in \R^d \;:\; f_i(x) \geq c_i \;\forall\, i = 1, \ldots, m\},\]
where $f_i \in (\Z^d)^*$ and $c_i \in \Z$. By standard arguments in convex geometry 
it follows that $\D(S) = \Rec(S)^\perp$, where 
\[\Rec(S) = \{y \in \R^d \;:\; \exists\, x \in S \text{ with } x + \R_{>0} y \subseteq S\}\]
is the {\em recession cone} of $S$. Since
\[\Rec(S) = \{y \in \R^d \;:\; f_i(y) \geq 0 \;\forall\, i = 1, \ldots, m\},\]
we see that $\D(S) = \L(S)_\R$ is the largest subspace contained in the rational polyhedral cone spanned by $f_1, \ldots, f_m$. 
The criterion in Theorem~\ref{thm:DualofS}, 
$0 < \width(S) < \infty$, holds if and only if $\dim(S) = d$ and $\dim(\Rec(S)) < d$.}
\end{remark}

We now show how Theorems \ref{mink} and \ref{thm:DualofS} imply
Theorem \ref{main}. Note that in general a full-dimensional compact convex set $S$ 
is not uniquely determined by $S'$, as exemplified in Figure~\ref{example}.

\begin{proof}[Proof of Theorem \ref{main}]
   We may assume $\width(S) < \infty$. By Corollary~\ref{observ} and 
   Theorem \ref{thm:DualofS}, we can apply Theorem \ref{mink} to
   $\conv S'$. This yields the desired upper bound $\card{S'} \leq 3^d-1$
   on the set $S'$ of lattice width directions.  Note that the bound is actually at
   most $3^{d-1}-1$ if $\conv S'$ does not have full dimension.  Hence,
   if $\card{S'}$ equals $3^d-1$, then $\conv S'$ is $d$-dimensional and
   by Theorem \ref{mink} there exists a lattice basis $e_1^*,\ldots,e_d^*$
   of $(\Z^d)^*$ such that $\conv S'$ is the standard lattice cube with vertices
   $\pm e_1 \pm \ldots \pm e_d$. After replacing $S$ by the closure of
   its convex hull we may assume that $S$ is closed and convex. Since
   all coordinates $e_i^*$ are bounded on $S$, the latter set is bounded,
   hence compact.

   We now show that $S$ is then a regular cross-polytope.  After
   translating $S$, we may assume that all coordinates take the
   same maximum $\lambda$ and the same minimum $-\lambda$ on $S$.
   For $i=1,\ldots,d$, let $p_i=\sum_{j=1}^d p_{ij} e_j \in S$ be a point
   with $i$-th coordinate $p_{ii}=\lambda$, and let $q_i=\sum_{j=1}^d
   q_{ij} e_j \in S$ be a point with $i$-th coordinate $q_{ii}=-\lambda$.
   By assumption, for every direction $v \in \{-1,0,1\}^d$, there exists
   a $t_v \in \R$ such that
   \begin{equation} \label{eq:*}\tag{$*$}
      -\lambda+t_v \leq v(p) \leq \lambda+t_v
   \end{equation} for all $p \in S$.  In particular, for distinct $i,j$
   we have \begin{align*}
      p_{ij} &\leq t_{e_i^*+e_j^*}, &  
      t_{e_i^*+e_j^*} &\leq q_{ij} \\
      -p_{ij} &\leq t_{e_i^*-e_j^*}, \text{ and } & 
      t_{e_i^*-e_j^*} &\leq -q_{ij},
   \end{align*} 
   so that $p_{ij}=t_{e_i^*+e_j^*}=-t_{e_i^*-e_j^*}=q_{ij}$.  Similarly,
   \begin{align*}
      p_{ji} &\leq t_{e_i^*+e_j^*}, 
      & t_{e_i^*+e_j^*} &\leq q_{ji} \\ 
      t_{e_i^*-e_j^*} &\leq p_{ji}, \text{ and } & 
      q_{ji} & \leq t_{e_i^*-e_j^*},
   \end{align*} so that $p_{ji}=t_{e_i^*+e_j^*}=t_{e_i^*-e_j^*}=q_{ji}$.
   Combining these, we find that $p_{ij}=q_{ij}=0$ for all distinct $i,j$,
   so that $p_i=\lambda e_i=-q_i$.  But then the inequalities \eqref{eq:*}
   for $v$, by filling in $p_i,q_i$ for some $i$ for which $v_i \neq 0$,
   give $t_v=0$ for all $v$.  The inequalities thus reduce to inequalities
   cutting out the cross-polytope spanned by $p_i$ and $q_i$.  Hence, $S$
   contains this cross-polytope and is contained in it.
\end{proof}

\section{A geometric proof of Theorem \ref{mink}}
\label{sec:GeometricProofOfMink}

In this section we give a geometric proof of Theorem \ref{mink}, inspired
by Minkowski's proof of his lattice point theorem.  It is based on
Groemer's article \cite{Groe61}. We start with the following
observation, a folklore result for which we could not find an explicit reference
in the literature.

\begin{thm}
Let $K \subseteq \R^d$ be a centrally-symmetric
convex set with $\Kiz=\{0\}$.  Then the union of the elements of
\begin{equation*}
      \mathscr{K} = \{ K + 2\alpha : \alpha \in \Kz \}
\end{equation*}
is contained in $3K$ and the relative interiors of these elements are pairwise disjoint.
\label{packing}
\end{thm}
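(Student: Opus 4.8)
The plan is to treat the two assertions separately: the containment in $3K$ is a direct consequence of convexity and uses nothing about the lattice, whereas the disjointness of the relative interiors is where the hypothesis $\Kiz = \{0\}$ enters, through a central-symmetry argument that I expect to be the heart of the proof.

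For the containment, I would fix $\alpha \in \Kz$ and an arbitrary $x \in K$ and show directly that $x + 2\alpha \in 3K$. Writing $x + 2\alpha = 3\bigl(\tfrac{1}{3}x + \tfrac{2}{3}\alpha\bigr)$ and observing that $\tfrac{1}{3}x + \tfrac{2}{3}\alpha$ is a convex combination of the two points $x, \alpha \in K$, convexity of $K$ gives $\tfrac{1}{3}x + \tfrac{2}{3}\alpha \in K$, so that $x + 2\alpha \in 3K$. This step only uses that $\alpha$ is a point of $K$; its being a lattice point plays no role here.

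For the disjointness I would argue by contradiction. Suppose $\alpha, \beta \in \Kz$ are distinct and the relative interiors of $K + 2\alpha$ and $K + 2\beta$ meet. Since translation is an affine homeomorphism, the relative interior of $K + 2\alpha$ is $\intr{K} + 2\alpha$, so there are points $x, y \in \intr{K}$ with $x + 2\alpha = y + 2\beta$. Putting $\gamma \deftobe \beta - \alpha$, a nonzero element of $\Z^d$, this reads $x - y = 2\gamma$. The key step is now the following: because $K$ is centrally symmetric, so is $\intr{K}$ (the relative interior commutes with $x \mapsto -x$), whence $-y \in \intr{K}$; and because $\intr{K}$ is convex, the midpoint $\gamma = \tfrac{1}{2}\bigl(x + (-y)\bigr)$ lies in $\intr{K}$. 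Thus $\gamma$ is a nonzero lattice point in the relative interior of $K$, contradicting $\Kiz = \{0\}$.

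The only subtlety I anticipate is the bookkeeping of relative interiors when $K$ is not full-dimensional, and I would dispatch it at the outset. Since $0 \in K$, the affine hull $W$ of $K$ is a linear subspace; hence $2\alpha \in W$ for every $\alpha \in \Kz$, so all the translates $K + 2\alpha$ as well as $3K$ lie in $W$, and the identity $\intr{(K + 2\alpha)} = \intr{K} + 2\alpha$ together with the central symmetry and convexity of $\intr{K}$ are all understood relative to $W$. With this in place the midpoint argument goes through verbatim, and no boundedness or full-dimensionality of $K$ is required.
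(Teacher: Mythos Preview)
Your proof is correct and follows essentially the same route as the paper: the containment via $x+2\alpha=3\bigl(\tfrac{1}{3}x+\tfrac{2}{3}\alpha\bigr)$ and the disjointness via the midpoint $\tfrac{1}{2}(x-y)=\beta-\alpha$ lying in $\intr{K}$ by central symmetry. Your extra paragraph on the affine hull $W$ when $K$ is not full-dimensional is a welcome clarification that the paper leaves implicit.
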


\begin{proof}
For $x \in K$ and $\alpha \in \Kz$ we have
$x+2\alpha=3(\frac{1}{3}x+\frac{2}{3}\alpha)\in 3K$ by convexity of
$K$. This shows that $\bigcup \mathscr{K} \subseteq 3K$.
To see that the relative interiors of the elements of $\mathscr{K}$
are disjoint, suppose otherwise. Then there exist $x,y$ in
the relative interior of $K$ and distinct $\alpha,\beta \in \Kz$ such that
$x+2\alpha=y+2\beta$. By central symmetry $(x-y)/2$ is in the relative interior
of $K$, while it equals $\beta-\alpha$, which is a non-zero lattice
point. This contradicts the assumption that $\Kiz=\{0\}$.
\end{proof}

As a straightforward consequence of this result we can prove the
$3^d$-bound.

\begin{proof}[Proof of upper bound in Theorem \ref{mink}]
   Let $d' \leq d$ be the dimension of $K$.
   It follows from the theorem just proved that
   \begin{equation}
    \card{\Kz} \Vol(K) \le \Vol(3K) = 3^{d'} \Vol(K),
   \label{eq}
   \end{equation}
 so that $\card{\Kz} \leq 3^{d'} \leq 3^d$, as claimed.
\end{proof}

For the equality case we use Hilfssatz 2 of \cite{Groe61}.  For this
recall that a {\em parallelepiped} is any $\R^d$-translate of the convex
hull of the points $\pm e_1 \pm \cdots \pm e_d$ for an $\R$-basis $e_1,
\ldots, e_d$ of $\R^d$. By a {\em homothetic copy} of a subset $K$ of
$\R^d$ we mean any set of the form $\alpha+\lambda K$ for some $\alpha
\in \R^d$ and $\lambda \in \R_+$.

\begin{thm}[Groemer 1961]
Let $K \subseteq \R^d$ be a $d$-dimensional compact convex subset of
$\R^d$ that can be covered with finitely many homothetic copies of $K$,
whose interiors are mutually disjoint.  Then $K$ is a parallelepiped.
\label{groemer}
\end{thm}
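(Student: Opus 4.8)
The plan is to first show, purely via support functions, that any such $K$ is automatically a polytope, and then to run an induction on the dimension that converts the combinatorial rigidity of the self-covering into the parallelepiped conclusion. Write the covering as $K=\bigcup_{i=1}^{n}(\alpha_i+\lambda_i K)$ with $\lambda_i\in\R_+$ and pairwise disjoint interiors. Comparing volumes gives $\sum_i\lambda_i^{d}=1$; discarding the trivial case in which a single copy equals $K$, we may assume every $\lambda_i<1$. Since each copy lies in $K$, for every direction $u$ the support functions satisfy $\langle\alpha_i,u\rangle+\lambda_i h_K(u)\le h_K(u)$, i.e.\ $\langle\alpha_i,u\rangle/(1-\lambda_i)\le h_K(u)$; and because the copies cover $K$, for each $u$ equality holds for some $i$. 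Hence
\begin{equation*}
   h_K(u)=\max_{1\le i\le n}\frac{\langle\alpha_i,u\rangle}{1-\lambda_i},
\end{equation*}
which is exactly the support function of the polytope $\conv\{v_1,\dots,v_n\}$ with $v_i:=\alpha_i/(1-\lambda_i)$. Thus $K$ is a polytope whose vertices lie among the $v_i$, and since $v_i$ is the fixed point of $x\mapsto\alpha_i+\lambda_i x$, the $i$-th copy is $v_i+\lambda_i(K-v_i)$, namely $K$ shrunk toward $v_i$.

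Next I would extract the combinatorial structure of the covering. Because the tangent cone of $v_i+\lambda_i(K-v_i)$ at $v_i$ coincides with that of $K$, this copy fills an entire neighbourhood of $v_i$ inside $K$; disjointness of interiors then forces it to be the \emph{only} copy meeting $v_i$, so each vertex of $K$ carries exactly one copy, shrinking $K$ toward it. For a facet $F$ of $K$ with outer normal $u$, a copy reaches the supporting hyperplane of $F$ precisely when its centre $v_i$ lies in $F$, and then its $u$-face is the homothetic copy $v_i+\lambda_i(F-v_i)$ lying in that hyperplane; these pieces cover $F$ with pairwise disjoint relative interiors. By induction on $d$ (the case $d=1$ being immediate, a segment being a $1$-dimensional parallelepiped), every facet $F$ is a $(d-1)$-parallelepiped. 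Applying the same incidence analysis along an edge $\{v,w\}$ shows that the relative interior of the edge is covered only by the copies centred at $v$ and $w$, which yields $\lambda_v+\lambda_w\ge 1$ for every edge, alongside the identities $\sum_v\lambda_v^{d}=1$ and $\sum_{v\in F}\lambda_v^{d-1}=1$ for every facet $F$.

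The hard part is the final upgrade from ``all facets are parallelepipeds'' to ``$K$ is a parallelepiped'': the facet shapes alone do not suffice, as the rhombic dodecahedron shows, all of whose $2$-faces are parallelograms while it is not a parallelepiped. The missing input must come from the covering itself, and I expect to use it to prove that every vertex of $K$ is \emph{simple}, meeting exactly $d$ facets. Concretely, I would combine the volume identity $\sum_v\lambda_v^{d}=1$, the facet identities $\sum_{v\in F}\lambda_v^{d-1}=1$, and the edge inequalities $\lambda_v+\lambda_w\ge 1$ to force, at each vertex, the vertex figure (already simplicial because the facets are parallelepipeds) to be a genuine simplex, thereby excluding the non-simple configurations of rhombic-dodecahedron type. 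Once every vertex is simple and every facet is a parallelepiped, the $d$ edges emanating from a vertex have, via the parallelogram $2$-faces, forced ``opposite'' partners, and an elementary rigidity argument identifies $K$ with the parallelepiped they span. Deriving the simplicity of the vertices from these numerical relations is the step I expect to demand the most care, since it is exactly here that the local shape data must be combined globally to rule out the extra vertices.
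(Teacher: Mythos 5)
The paper itself does not prove this statement---it is quoted as Hilfssatz~2 from Groemer's article \cite{Groe61} and used as a black box---so there is no in-paper argument to compare against, and I can only assess your proposal on its own terms. Its first half is correct and cleanly executed: the support-function identity $h_K(u)=\max_i \langle \alpha_i,u\rangle/(1-\lambda_i)$ does show that $K=\conv\{v_1,\dots,v_n\}$ is a polytope with $v_i=\alpha_i/(1-\lambda_i)$ the fixed point of the $i$-th homothety; each vertex of $K$ is such a fixed point and is met by exactly one copy; and the induced coverings of the facets let you conclude by induction on $d$ that every facet is a parallelepiped. You are also right that this alone cannot suffice and that rhombic-dodecahedron-type polytopes are exactly what must still be excluded.

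The genuine gap is in the final step, which you leave as a plan rather than a proof, and the one new numerical input you propose to feed into that plan is false. The claim that the relative interior of an edge $[v,w]$ is covered only by the copies centred at $v$ and $w$, whence $\lambda_v+\lambda_w\ge 1$, already fails for the model example of this theorem: cover $[0,3]^d$ by the $3^d$ lattice translates of $[0,1]^d$. All ratios are $1/3$, the copy $[1,2]\times[0,1]^{d-1}$ has its homothety centre at the midpoint $(3/2,0,\dots,0)$ of the edge from the origin to $(3,0,\dots,0)$, and it covers the middle third of that edge, so the two endpoint copies give only $\lambda_v+\lambda_w=2/3<1$. The correct edge relation is $\sum_{v_i\in e}\lambda_i=1$ summed over \emph{all} copies centred on $e$, which is just the $1$-dimensional instance of the facet identity and adds no new constraint. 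With that inequality gone, nothing in the proposal actually forces the vertices of $K$ to be simple, and simplicity is precisely the point where the rhombic dodecahedron must be killed (once simplicity and parallelepiped facets are both in hand, the concluding rigidity argument is indeed routine). So the heart of the theorem---extracting from the covering the global information that rules out non-simple vertices---remains unproved in your write-up, as you yourself anticipate in the last sentence.
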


Using this geometric result we can finish the proof of Theorem
\ref{mink}.

\begin{proof}[Proof of equality case in Theorem \ref{mink}]
   By Equation (\ref{eq}) we may assume that $K$ is $d$-dimensional. 
   Let us first argue that it suffices to consider the case where
   $K$ is compact.
   We already know by Minkowski's fundamental lattice point theorem
   that the volume of $K$ is bounded.  Now, let $\clK$ be the
   closure of $K$.  Then $\clK$ is also a $d$-dimensional
   centrally-symmetric convex set such that $\clKiz=\{0\}$.
   Therefore, the $3^d$-bound yields $\card{\clKz} = 3^d$.  Assume
   we already showed that $\clK$ is a standard lattice cube.  Since $K$
   has the same number of lattice points as $\clK$, all of the
   $2^d$ vertices of the standard lattice cube $\clK$ also have to be
   contained in $K$.  This shows $K=\clK$.

   Hence, we may assume that $K$ is compact. Now, by Theorem \ref{packing}
   we see that that the translates of $K$ by its $3^d$ lattice points
   together cover $3K$ and that their interiors do not intersect.
   Applying Theorem \ref{groemer} to $3K$ yields that $3K$ is a
   parallelepiped, hence so is $K$. By central symmetry, $K$ equals the
   convex hull of the $2^d$ vertices $\pm e_1 \pm \ldots \pm e_d$ for some
   $\R$-basis $e_1, \ldots, e_d$ of $\R^d$. It remains to show that $e_1,
   \ldots, e_d$ is a $\Z$-basis of $\Z^d$. We do this by arguing that
   there is only one way to cover the parallelepiped $3K$ with $3^d$
   translates of $K$, namely with the translates over the vectors $2
   \sum_{i=1}^d \epsilon_i e_i$ with each $\epsilon_i \in \{-1,0,1\}$.
   Indeed, this follows from a simple induction on $d$: consider any
   covering of $3K$ with $3^k$ translates of $K$. Then their interiors
   do not intersect for volume reasons. Now consider the facet $F$
   of $3K$ where the $e_1$-coordinate equals $3$. This facet is a
   $(d-1)$-dimensional parallelepiped which is covered by facets of
   translates $K_i$ of $K$. Since the interiors of the $K_i$ do not
   intersect, the relative interiors of their facets $F_i$ on $F$ do not
   intersect either. Hence, for volume reasons there are exactly $3^{d-1}$
   facets of translates of $K$ covering $F$. By induction, the $K_i$
   are the translates of $K$ over the vectors $2e_1 + 2 \sum_{i=2}^d
   \epsilon_i e_i$ with $\epsilon_i \in \{-1,0,1\}$. The same argument
   applies to the remaining two layers of $3K$ in the $e_1$-direction,
   so that the covering of $3K$ equals the standard covering above. Now,
   since the translates of $K$ over the vectors in $2\Kz$ also cover $3K$,
   the vectors $\sum_{i=1}^d \epsilon_i e_i$ with each $\epsilon_i \in
   \{-1,0,1\}$ are precisely the lattice points in $K$. In particular,
   all $e_i$ are in $\Z^d$.  Finally, they must generate
   $\Z^d$, or else $K$
   would contain more than $3^d$ lattice points. This proves that $K$
   is a standard cube.
\end{proof}

\section{A Minkowski-style proof of Theorem \ref{mink}}
\label{sec:MinkowskiStyleProofOfMink}

Minkowski's original proof of the $3^d$ bound relies on
considering congruences of lattice points.  By the same method he
also provided a sharpening of this bound in an important subcase.
Let us recall his elegant proof of these results.  For this let us
denote for a subset $S$ of $\R^d$ by $\ci(S)$ the relative
interior of $\conv(S)$.  Moreover, by $\Kbz$ we denote the set of
lattice points on the boundary of a convex set $K$.

\begin{thm}[Minkowski 1910]
   Let $K \subseteq \R^d$ be a $d$-dimensional centrally-symmetric
   convex set.
   \begin{enumerate}
      \item 
      If $\Kiz=\{0\}$, then $\card{\Kz} \leq 3^d$.  
      
      \item
      If $\Kiz=\{0\}$ and no boundary lattice point of $K$ is in
      the convex hull of some others, then $\card{\Kz} \leq
      2^{d+1}-1$.
   \end{enumerate}
   \label{old}
\end{thm}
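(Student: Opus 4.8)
The plan is to carry out Minkowski's congruence method, deriving the two bounds from the reductions of lattice points modulo $3$ and modulo $2$ respectively. I will use repeatedly that, since $K$ is $d$-dimensional and centrally symmetric, the origin lies in the interior of $K$, so that any convex combination of a point of $K$ with $0$ in which $0$ carries positive weight again lies in the interior of $K$.

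For part (1), I would show that the reduction map $\Kz \to (\Z/3\Z)^d$ is injective, which at once gives $\card{\Kz} \le 3^d$. Suppose $a, b \in \Kz$ are distinct with $a \equiv b \pmod 3$. Then $(a-b)/3$ is a nonzero lattice point, and since $a \in K$, $-b \in K$ by central symmetry, and $0 \in K$, the identity
\[
\frac{a-b}{3} = \tfrac13 a + \tfrac13(-b) + \tfrac13\cdot 0
\]
writes it as a convex combination in which the interior point $0$ has positive weight. Hence $(a-b)/3$ lies in the interior of $K$, contradicting $\Kiz = \{0\}$. So the reduction is injective.

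For part (2), I would study the reduction map $\psi\colon \Kbz \to (\Z/2\Z)^d$ on the boundary lattice points. The geometric ingredient is the remark that if the midpoint $m = (a+b)/2$ of two points $a, b \in K$ lies on $\partial K$, then the entire segment $[a,b]$ lies on $\partial K$; this is seen by evaluating a supporting hyperplane at $m$. Now take distinct $a, b \in \Kbz$ with $\psi(a) = \psi(b)$, so that $m = (a+b)/2$ is a lattice point of $K$. If $a \neq -b$ then $m \neq 0$, hence $m$ is a boundary lattice point; by the remark it lies in $\conv\{a,b\}$ and is distinct from $a$ and $b$, contradicting the hypothesis that no boundary lattice point lies in the convex hull of others. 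Therefore $b = -a$, and each nonzero residue class is met by at most the antipodal pair $\{a, -a\}$, i.e.\ by at most two boundary lattice points.

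It remains to handle the class $0 \in (\Z/2\Z)^d$: if $a \in \Kbz$ satisfies $a \equiv 0 \pmod 2$, then $a/2 = \tfrac12 a + \tfrac12\cdot 0$ is a lattice point in the interior of $K$, nonzero since $a \neq 0$, again contradicting $\Kiz = \{0\}$. Thus the zero class carries no boundary lattice point, and combining this with the previous paragraph gives
\[
\card{\Kbz} \le 2(2^d - 1) = 2^{d+1} - 2.
\]
Adding the single interior lattice point $0$ yields $\card{\Kz} \le 2^{d+1}-1$. I expect the delicate step to be part (2): the boundary-segment lemma must be combined with careful bookkeeping of residue classes, and obtaining the sharp constant depends precisely on observing that the class $0$ receives no boundary lattice point, which is what turns a naive bound of $2^{d+1}+1$ into $2^{d+1}-1$.
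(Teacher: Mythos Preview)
Your proof is correct and follows essentially the same congruence argument as the paper: reduction modulo $3$ for part~(1), and reduction modulo $2$ on boundary lattice points with the same fibre-counting for part~(2). One small remark: the supporting-hyperplane observation that the whole segment $[a,b]$ lies on $\partial K$ is not actually needed---once $m=(a+b)/2$ is a nonzero lattice point it is automatically in $\conv\{a,b\}$ and, by $\Kiz=\{0\}$, on $\partial K$, which already contradicts the hypothesis.
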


\begin{proof}
   (1) We regard the canonical map $\gamma \maps \Z^d \to
   (\Z/3\Z)^d$.  We claim that $\gamma$ is injective on $\Kz$.
   For let $x,y \in \Kz$ such that $\gamma(x) = \gamma(y)$ be
   given.  Then $\gamma(x-y)=0$, so $z \deftobe (x-y)/3 \in \Z^d$.
   Since $K$ is centrally-symmetric, we see that $z \in
   \ci(0,x,-y) \subset \Kin$.  This implies $z \in \Kiz = \{0\}$,
   so $z=0$.  We deduce $x=y$, so $\gamma$ is injective, as
   claimed.

   (2) In this case, we look at the canonical map $\delta \maps
   \Z^d \to (\Z/2\Z)^d$.  Assume there is a boundary lattice point
   $v \in \Kbz$ such that $\delta(v) = 0$.  Then $v/2 \in \Z^d$,
   in particular, $0 \not= v/2 \in \Kiz$, a contradiction.  Hence,
   $\delta^{-1}(0) \cap \Kbz = \emptyset$.  Let $0 \not= f \in
   (\Z/2\Z)^d$ be fixed.  We claim that $\card{\delta^{-1}(f) \cap
   \Kbz} \leq 2$.  From this we immediately get the upper bound.
   So, assume that there are $x,y \in \Kbz$, $x \not= y \not= -x$,
   such that $\delta(x)=\delta(y)$.  Then $\delta(x-y)=0$ and
   therefore $z := (x-y)/2$ lies in $\Z^d$.  Since $K$ is
   centrally-symmetric and $x \not=-y$, we see that $z \in
   \ci(x,-y)$.  Since $x \not= y$, we have $z \not= 0$.  Therefore
   $z \in \Kbz$, a contradiction to the assumption.
\end{proof}

In the remainder of this section we prove Theorem \ref{mink}
following Minkowski's approach.  As it will turn out, it is enough to consider the case
of lattice polytopes.  For this let us recall that a {\em lattice
polytope} is the convex hull of finitely many lattice points in
$\Z^d$.  Now, the main idea is to use the modulo map to
inductively construct lattice points until we find a lattice point
in the interior of a facet.  This goal is inspired by the proof of
Theorem \ref{main} in a special case, see \cite[Theorem
6.1]{Nill06a}.  Then we show that $P$ has to be a prism over this
facet by applying a lattice point addition method analogous to
\cite[Lemma 5.9]{Nill06a}.  This allows us to proceed by induction
on the dimension.

\medskip {\em From now on let $d \geq 2$, and $P \subseteq \R^d$
be a $d$-dimensional centrally-symmetric lattice polytope with
$\Piz=\{0\}$ and $\card{\Pz} = 3^d$.} \medskip

The following result is the key-lemma for our proof.

\begin{lem}
\label{crux}
   For $x,y \in \Pz$ there exists a unique $z \in \Pz$ such that
   \begin{equation*}
      w \deftobe \frac{x+y+z}{3} \in \Z^d.
   \end{equation*}
   The lattice point lies in $\Pz$, and if $x \neq y$ then also $x
   \neq z \neq y$.
\end{lem}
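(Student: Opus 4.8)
The plan is to exploit the reduction map $\gamma \maps \Z^d \to (\Z/3\Z)^d$ already used in the proof of Theorem~\ref{old}(1). There it is shown that $\gamma$ is injective on $\Pz$ for any centrally-symmetric convex set having only the origin in its interior. Since here $\card{\Pz} = 3^d = \card{(\Z/3\Z)^d}$ by hypothesis, injectivity forces $\gamma$ to restrict to a \emph{bijection} $\Pz \to (\Z/3\Z)^d$. This bijection will be the engine of the whole argument, so I would establish it first.

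For existence and uniqueness of $z$, I would observe that the condition $w = (x+y+z)/3 \in \Z^d$ is equivalent to $\gamma(x) + \gamma(y) + \gamma(z) = 0$ in $(\Z/3\Z)^d$, that is, to $\gamma(z) = -\gamma(x) - \gamma(y)$. Because $\gamma$ restricts to a bijection on $\Pz$, there is exactly one $z \in \Pz$ solving this equation, which gives existence and uniqueness simultaneously. To see that the resulting $w$ lies in $\Pz$, I would note that $w \in \Z^d$ by construction and that $w = \tfrac{1}{3}x + \tfrac{1}{3}y + \tfrac{1}{3}z$ is a convex combination of $x, y, z \in P$; hence $w \in P$ by convexity, and therefore $w \in P \cap \Z^d = \Pz$.

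For the final non-degeneracy claim, I would argue by contradiction. Suppose $x \neq y$ but $z = x$. Then $\gamma(x) = \gamma(z) = -\gamma(x) - \gamma(y)$, whence $\gamma(y) = -2\gamma(x) = \gamma(x)$ in $(\Z/3\Z)^d$, using $-2 \equiv 1 \pmod 3$. Injectivity of $\gamma$ on $\Pz$ then forces $x = y$, contradicting $x \neq y$. The case $z = y$ is entirely symmetric, so both $x \neq z$ and $z \neq y$ hold whenever $x \neq y$.

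I expect no serious obstacle here: the one point requiring care is the cardinality count that upgrades the injectivity from Theorem~\ref{old}(1) to a bijection. Once that is in place, existence, uniqueness, membership of $w$ in $\Pz$, and the non-degeneracy all follow by short formal manipulations in $(\Z/3\Z)^d$ together with a single appeal to convexity.
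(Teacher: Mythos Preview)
Your proof is correct and follows essentially the same approach as the paper: both use the bijection $\gamma\colon \Pz \to (\Z/3\Z)^d$ (injectivity from Theorem~\ref{old}(1), surjectivity from $\card{\Pz}=3^d$) to locate $z$ via $\gamma(z)=-\gamma(x)-\gamma(y)$, deduce $w\in\Pz$ by convexity, and obtain $x\neq z\neq y$ from the same mod-$3$ computation you spell out. The only cosmetic difference is that the paper phrases the last step directly rather than by contradiction.
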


\begin{proof}
   Consider the canonical map $\gamma \maps \Z^d \to (\Z/3\Z)^d$.
   As was shown in the proof of Theorem \ref{old}(1) the map
   $\gamma$ is injective on $\Pz$.  Since $\card{Pz}=3^d$ it is
   actually a bijection.  Therefore, there exists a unique $z \in
   \Pz$ such that $\gamma(x)+\gamma(y)+\gamma(z)=0$.  This latter
   equality is equivalent to $w \in \Z^d$.  The point $w$ is a
   convex combination of $x,y,z$ and hence lies in $\Pz$.
   Finally, if $x \neq y$ then $\gamma(x) \neq \gamma(y)$ and
   hence $\gamma(z)=-\gamma(x)-\gamma(y)$ equals neither
   $\gamma(x)$ nor $\gamma(y)$.  Hence $x \neq z \neq y$, as
   desired.
\end{proof}

We are going to use this observation in an inductive way.  For
this, let us write $F \leq P$ if $F$ is a face of $P$, and let us
denote by $\V(P)$ the set of vertices, \emph{i.e.},
$0$-dimensional faces, of $P$, and by $\F(P)$ the set of facets,
i.e., $(d-1)$-dimensional faces.  If $F$ is a facet of $P$, we
denote by $u_F \in (\Q^d)^*$ the unique outer normal of $F$
determined by $u_F(F) = 1$ and $u_F(P) \leq 1$.

\begin{prop}
   For $k = 1, \ldots, d-1$ there exists a face $F \lneq P$ such
   that $\dim(F) \geq k$ and $\Fiz \not= \emptyset$.
\end{prop}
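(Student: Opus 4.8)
The plan is to prove a slightly stronger statement that feeds its own induction: \emph{every proper face $F \lneq P$ with $\dim F = m \leq d-2$ and $\Fiz \neq \emptyset$ is contained in a proper face $G$ with $\dim G \geq m+1$ and $\Giz \neq \emptyset$.} Granting this, I would start from any vertex of $P$ (a $0$-dimensional face whose relative interior is a single lattice point) and iterate: each application strictly raises the dimension, so I obtain proper faces carrying relative-interior lattice points of dimensions $0 < m_1 < m_2 < \cdots$ climbing until dimension $d-1$. For each $k = 1, \dots, d-1$ this chain contains a face of dimension $\geq k$, which is exactly the claim. So everything reduces to the single growth step above.

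To carry out the step, fix $p \in \Fiz$ and choose a primitive integral $u \in (\Z^d)^*$ in the relative interior of the normal cone of $F$, so that $F = \{x \in P \st u(x) = c\}$ with $c \deftobe \max_P u = u(p)$; here $c > 0$, since the proper face $F$ avoids the interior point $0$. The engine is Lemma~\ref{crux}: for $q \in \Pz$ with $q \neq p$ it returns $z \in \Pz$ with centroid $w \deftobe (p+q+z)/3 \in \Pz$, and, by central symmetry together with the uniqueness in that lemma, $w = 0$ precisely when $p+q \in P$. I would therefore hunt for a lattice point $q$ with $0 < u(q) < c$. Such a $q$ lies off $\operatorname{aff}(F) \subseteq \{u = c\}$ and satisfies $u(p+q) = c + u(q) > c$, so $p+q \notin P$ and hence $w \neq 0$; as $\Piz = \{0\}$, this forces $w \in \bnd P$, so its minimal face $G$ is proper. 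Evaluating any supporting functional defining $G$ on $w = (p+q+z)/3$ pins $p,q,z$ to $G$; thus $G \supseteq F$ (it contains the relative-interior point $p$ of $F$) and $G \ni q \notin \operatorname{aff}(F)$, giving $\dim G \geq m+1$ with $w \in \Giz$. This finishes the step as soon as some lattice point with $0 < u(q) < c$ exists.

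The main obstacle is the degenerate case in which \emph{no} lattice point satisfies $0 < u(q) < c$. I expect to dispose of it by a counting argument. In that case central symmetry forces every element of $\Pz$ to have $u$-value in $\{-c,0,c\}$, splitting $\Pz$ into the top layer $A = \Fz$, the central slice $B = \Pz \cap \{u=0\}$, and the bottom layer $-\Fz$, with $\card{A} = \card{-\Fz}$ and $2\card{A} + \card{B} = 3^d$. The slice $Q \deftobe P \cap \{u=0\}$ is a $(d-1)$-dimensional centrally-symmetric convex body whose only relative-interior lattice point is $0$, so the $3^{d-1}$ upper bound from Theorem~\ref{old}(1) gives $\card{B} \leq 3^{d-1}$ and hence $\card{\Fz} = \card{A} \geq 3^{d-1}$. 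On the other hand the modular map $\gamma \maps \Z^d \to (\Z/3\Z)^d$ is injective on $\Pz$, while $\gamma(\Fz)$ lies in the image of the rank-$m$ coset $\operatorname{aff}(F)\cap\Z^d$, an affine subgroup with at most $3^m \leq 3^{d-2}$ elements; thus $3^{d-1} \leq \card{\Fz} = \card{\gamma(\Fz)} \leq 3^{d-2}$, which is absurd for $d \geq 2$ (and already rules out $m = 0$). So the degenerate case never arises and the step always succeeds. The two supporting facts the mechanism rests on---that the relative interior of a hyperplane section through $\intr P$ meets $\Z^d$ only in $0$, and that a linear functional maximized at a strict convex combination is maximized at each term---I expect to be routine, so the real content is precisely this counting argument that eliminates the degenerate case.
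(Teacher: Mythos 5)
Your proof is correct, and it takes a genuinely different route from the paper's. The paper treats the base case $k=1$ separately via Theorem~\ref{old}(2) (if no positive-dimensional proper face had a relative-interior lattice point, every boundary lattice point would be a vertex, forcing $3^d \le 2^{d+1}-1$), and in the inductive step it fixes a $k$-face $G \supseteq F$, picks a vertex $y$ of $G$ outside $F$ with $\ci(x,y) \subseteq \Gin$, applies Lemma~\ref{crux} to $(x,y)$, and then runs a case analysis on whether $x,y,z$ are collinear. You instead prove a single growth step seeded by a lattice point $q$ in an intermediate layer $0 < u(q) < c$; the minimal face containing $w$ then absorbs $p$, $q$ and $z$ simultaneously because faces are extreme sets, which removes the collinearity case analysis entirely. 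The genuinely new ingredient is your counting argument for the existence of such a $q$: if all of $\Pz$ sat in the three layers $u \in \{-c,0,c\}$, then Theorem~\ref{old}(1) applied to the central slice $P \cap u_F^\perp$ (a $(d-1)$-dimensional centrally-symmetric body with $0$ as its only relative-interior lattice point) would force $\card{\Fz} \ge \tfrac{1}{2}(3^d - 3^{d-1}) = 3^{d-1}$, while injectivity of $\gamma$ on $\Pz$ together with $\card{\gamma(\operatorname{aff}(F) \cap \Z^d)} \le 3^m$ caps $\card{\Fz}$ at $3^m \le 3^{d-2}$. This one quantitative obstruction replaces both the paper's appeal to the $2^{d+1}-1$ bound and its geometric case distinctions, at the price of invoking Minkowski's $3^{d-1}$ bound one dimension down and of choosing $u$ carefully in the relative interior of the normal cone (so that the top layer is exactly $\Fz$), which you do. The facts you defer as routine (existence of a primitive integral $u$ in the relative interior of the rational normal cone, $\operatorname{relint}(P \cap u_F^\perp) \subseteq \Pin$, and the extreme-set property of faces) are indeed standard, so the argument is complete.
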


\begin{proof}
   Let $k=1$ and assume the statement were false.  In this case,
   $\V(P) = \Pbz$, hence Theorem \ref{old}(2) yields $3^d =
   \card{\Pz} \leq 2^{d+1}-1$, in contradiction to $d \geq 2$.

   We proceed by induction.  Let $2 \leq k \leq d-1$.  Then, by
   the induction hypothesis, there exists a face $F \lneq P$ such
   that $\dim(F) \geq k-1$ and $\Fiz \not= \emptyset$.  We may
   assume that $\dim(F) = k-1$ and $x \in \Fiz$.  Let us choose a
   face $G$ of $P$ of dimension $k$ such that $F \subset G$.
   Since $k < d$, we have $G \not= P$.  Because $F$ is a facet of
   $G$, there exists a vertex $y \in \V(G)$, $y \notin F$, such
   that $\ci(x,y) \subseteq \Gin$.  Let $z,w$ be chosen as in
   Lemma \ref{crux}.  We distinguish two cases.

   \begin{enumerate}
      \item
      $\dim(x,y,z) = 1$.  We have three subcases to consider:
      \begin{enumerate}
         \item
         $x \in \ci(y,z)$.  Since $x \in F$, we get $y \in F$, a
         contradiction.
         
         \item
         $y \in \ci(x,z)$.  This is a contradiction to $y \in
         \V(P)$.
         
         \item
         $z \in \ci(x,y)$.  Hence, $z \in \Giz$, so $G$ satisfies
         the conditions of the Proposition, as desired.
      \end{enumerate}
      
      \item
      $\dim(x,y,z) = 2$.  Therefore, $w = (x+y+z)/3 \in
      \ci(x,y,z)$.  Let $H$ be a face of $P$ such that $w \in
      \Hiz$.  Then, $x,y,z \in H$.  In particular, $\ci(x,y)
      \subseteq H$, so also $G \subseteq H$.  Hence, $\dim(H) \geq
      \dim(G) = k$.  We claim that $H$ satisfies the conditions of
      the Proposition.  It remains to show that $H \not= P$.  So,
      assume $H = P$.  In this case, $w \in \Piz$, so $w = 0$, in
      particular, $x+y+z=0$.  Now, let $u$ be the unique outer
      normal of a facet of $P$ containing $G$.  By
      central-symmetry, $-u$ is also an outer normal of a facet of
      $P$.  However, $-u(z) = -u(-x-y) = 2$, in contradiction to
      $z \in P$.
   \end{enumerate}
\end{proof}

Applying the Proposition for $k=d-1$ yields:

\begin{cor}
   There exists a facet $F \in \F(P)$ such that $\Fiz \ne
   \emptyset$.
\label{facet}
\end{cor}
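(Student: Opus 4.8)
The plan is to obtain this statement as a clean specialization of the preceding Proposition, taking the largest admissible value $k = d-1$. First I would invoke the Proposition with $k = d-1$. This immediately produces a proper face $F \lneq P$ enjoying two properties simultaneously: the dimension bound $\dim(F) \geq d-1$, and the nonemptiness $\Fiz \neq \emptyset$ of its set of relative-interior lattice points. All of the substantive combinatorial work has already been done in the inductive construction behind the Proposition, so the only task that remains is to recognize that such an $F$ must in fact be a facet.

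The remaining point is an elementary dimension count. Since $F$ is a \emph{proper} face of the $d$-dimensional polytope $P$, its affine hull is properly contained in that of $P$, and hence $\dim(F) < d$, i.e.\ $\dim(F) \leq d-1$. Combining this upper bound with the lower bound $\dim(F) \geq d-1$ furnished by the Proposition pins the dimension down exactly: $\dim(F) = d-1$. Thus $F$ is a facet, $F \in \F(P)$, and by construction it already satisfies $\Fiz \neq \emptyset$. This is precisely the facet asserted to exist.

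I do not expect any genuine obstacle here. The corollary carries no new content beyond the Proposition; the entire difficulty lies upstream, in the case analysis (using Lemma~\ref{crux} and the central-symmetry argument excluding $H = P$) that manufactures an interior lattice point on a face of ever-increasing dimension. The only mild subtlety in the corollary itself is the observation that a proper face cannot attain the full dimension $d$, which is what forces the dimension to equal $d-1$ rather than merely being at least $d-1$; everything else is a direct quotation of the Proposition's conclusion.
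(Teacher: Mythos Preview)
Your proposal is correct and matches the paper's approach exactly: the paper simply writes ``Applying the Proposition for $k=d-1$ yields:'' before stating the corollary, and your argument spells out the implicit dimension count (a proper face has dimension $\leq d-1$, hence exactly $d-1$) that justifies this one-line deduction.
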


From now on, we will intensively use this corollary.

\begin{prop}
   Let $x \in \Fiz$ for $F \in \F(P)$.  Then \[x + (\Pz \setminus
   \Fz) \subseteq \Pz.\]
\label{add}
\end{prop}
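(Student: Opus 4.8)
The plan is to prove the inclusion one point at a time: I fix an arbitrary $y \in \Pz \setminus \Fz$ and aim to show $x+y \in \Pz$. The engine is Lemma~\ref{crux}. Applying it to the pair $(x,y)$ produces a unique $z \in \Pz$ with $w \deftobe (x+y+z)/3 \in \Z^d$, and the lemma guarantees $w \in \Pz$. The key observation is that it suffices to prove $w=0$: for then $x+y+z = 0$, so $x+y = -z$, and $-z \in \Pz$ by central symmetry, giving exactly $x+y \in \Pz$. Thus the entire problem collapses to showing that the point $w$ furnished by Lemma~\ref{crux} is the origin.

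To prove $w=0$ I would use that $w \in \Pz$ together with the hypothesis $\Piz = \{0\}$: it is enough to show that $w$ lies in the interior of $P$. Writing $P = \{\, p : u_G(p) \le 1 \text{ for all } G \in \F(P)\,\}$, the interior is cut out by the strict inequalities $u_G(p) < 1$, so I must check $u_G(w) < 1$ for every facet $G$. Since $w = (x+y+z)/3$ is an average, this amounts to the estimate $u_G(w) = \tfrac13\bigl(u_G(x) + u_G(y) + u_G(z)\bigr) < 1$, for which I would split into the cases $G = F$ and $G \neq F$.

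It is precisely in this split that the two hypotheses $x \in \Fiz$ and $y \notin \Fz$ are used. For the facet $F$ itself, $u_F(x) = 1$ and $u_F(z) \le 1$, but $u_F(y) < 1$: a lattice point $y$ of $P$ with $u_F(y) = 1$ would lie in $F \cap \Z^d = \Fz$, which is excluded. For any other facet $G \neq F$, the decisive point is $x$: lying in the \emph{relative} interior of $F$, it meets no other facet, so $u_G(x) < 1$, while $u_G(y), u_G(z) \le 1$. In both cases one strict inequality in the numerator forces $u_G(w) < 1$. Hence $u_G(w) < 1$ for all $G$, so $w \in \Pin$ and therefore $w = 0$, which would complete the argument.

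I expect the one genuinely geometric point—rather than bookkeeping—to be the claim that a lattice point in the relative interior of a facet lies strictly inside every \emph{other} facet hyperplane; this is exactly what keeps the average $w$ from slipping onto the boundary through some facet $G \ne F$, and it is what makes the presence of $x$ in the triple so effective.
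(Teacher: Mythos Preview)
Your argument is correct and, in fact, cleaner than the paper's. Both proofs reach the same endpoint---showing that the lattice point $w=(x+y+z)/3$ furnished by Lemma~\ref{crux} equals $0$, whence $x+y=-z\in\Pz$ by central symmetry---but the routes differ. The paper first disposes of the trivial cases $y\in\{0,-x\}$ and then splits according to whether $x,y,z$ are collinear: if they are, each of the three betweenness configurations is eliminated by contradiction; only in the genuinely two-dimensional case does it argue that $w\in\ci(x,y,z)\subseteq\Pin$ and hence $w=0$. You bypass this case analysis entirely by working directly with the facet normals $u_G$: the single observation that $x\in\Fiz$ forces $u_G(x)<1$ for every $G\neq F$, while $y\notin\Fz$ forces $u_F(y)<1$, and these two strict inequalities are enough to make every facet inequality strict at the average $w$, regardless of the affine dimension of $\{x,y,z\}$. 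Your approach is uniform, needs no separate handling of $y=0$ or $y=-x$, and makes the role of the hypothesis $x\in\Fiz$ (rather than merely $x\in\Fz$) completely transparent.
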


\begin{proof}
   Let $y \in \Pz \setminus \Fz$.  Therefore, $\ci(x,y) \subseteq
   \Pin$.  We may assume $y \notin \{0, -x\}$.  Let $z,w$ be
   chosen as in Lemma \ref{crux}.  Again, we distinguish two
   cases.

   \begin{enumerate}
      \item
      $\dim(x,y,z) = 1$. 
      \begin{enumerate}
         \item
         $x \in \ci(y,z)$.  Since $x \in F$, we get $y \in F$, a
         contradiction.
         
         \item
         $y \in \ci(x,z)$.  Since $y \notin F$, we get $z \notin
         F$.  Therefore, $y \in \Piz = \{0\}$, a contradiction.
         
         \item
         $z \in \ci(x,y)$.  Hence, $z \in \Piz = \{0\}$, so
         necessarily $y=-x$, a contradiction.
      \end{enumerate}
      
      \item
      $\dim(x,y,z) = 2$.  Therefore, $w = (x+y+z)/3 \in
      \ci(x,y,z)$.  Hence, $w \in \Piz = \{0\}$.  This implies
      $x+y = -z \in \Pz$ by central-symmetry, as desired.
   \end{enumerate}
\end{proof}

Here is a direct consequence.  For this let us define $u_F^\perp
\deftobe \{v \in \R^d \st u_F(v) = 0\}$ for a facet $F \in
\F(P)$.

\begin{cor}
   Let $x \in \Fiz$ for $F \in \F(P)$.  Then
   \begin{equation*}
      \Pz = \Fz \sqcup (\Pz \cap u_F^\perp) \sqcup (-\Fz).
   \end{equation*}
   Moreover, the map $\Z^d \to \Z^d$, $y \mapsto x+y$, induces
   bijections
   \begin{equation*}
      (-\Fz) \to (\Pz \cap u_F^\perp) \to \Fz.
   \end{equation*}
   In particular, $\V(P) \subseteq \Fz \sqcup (-\Fz)$.
\label{layer}
\end{cor}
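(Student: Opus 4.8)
The plan is to exploit Proposition~\ref{add} together with central symmetry to decompose $\Pz$ into three ``slices'' perpendicular to the facet normal $u_F$. First I would fix $x \in \Fiz$ and observe that, since $F$ is a facet with $u_F(F) = 1$ and $u_F(P) \le 1$, every lattice point $y \in \Pz$ satisfies $u_F(y) \in \{\dots\} $; more precisely, because $P$ is a lattice polytope, $u_F(y)$ takes integer values on $\Fz$ (namely $u_F = 1$ there) and, by central symmetry, $u_F = -1$ on $-\Fz$. The key claim is that the only possible values of $u_F$ on $\Pz$ are $1, 0, -1$: indeed if some lattice point had $u_F$-value $\le -2$, then adding $x$ to it using Proposition~\ref{add} (after checking it lies in $\Pz \setminus \Fz$) would produce lattice points violating $\Piz = \{0\}$ or pushing outside $P$, contradicting $u_F(P) \le 1$. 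This gives the three-set union; disjointness is immediate since the sets sit in distinct $u_F$-levels.

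The heart of the argument is the chain of bijections. I would define the translation map $\tau \maps y \mapsto x + y$ and show it sends $-\Fz$ into $\Pz \cap u_F^\perp$ and sends $\Pz \cap u_F^\perp$ into $\Fz$. Because $u_F(x) = 1$, translation by $x$ raises the $u_F$-level by exactly $1$, so $\tau$ carries the level $-1$ slice into the level $0$ slice and the level $0$ slice into the level $1$ slice, matching the target sets. That these translates actually land \emph{inside} $\Pz$ is exactly the content of Proposition~\ref{add}: for $-\Fz$, note that $-\Fz \subseteq \Pz \setminus \Fz$ (the levels differ), so $x + (-\Fz) \subseteq \Pz$, and this image lies in level $0$; similarly $\Pz \cap u_F^\perp \subseteq \Pz \setminus \Fz$, so $x + (\Pz \cap u_F^\perp) \subseteq \Pz$ lands in level $1$, hence in $\Fz$. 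Injectivity of $\tau$ is trivial (it is a translation), so I only need surjectivity onto each target.

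For surjectivity I would run the same construction with $-x$ in place of $x$. By central symmetry $-F$ is also a facet with interior lattice point $-x \in (-F)^\circ_\Z$, and Proposition~\ref{add} applied to $-x$ gives $-x + (\Pz \setminus (-\Fz)) \subseteq \Pz$, i.e.\ translation by $-x$ \emph{lowers} the level by $1$ and maps $\Fz \to \Pz \cap u_F^\perp$ and $\Pz \cap u_F^\perp \to -\Fz$. Since translation by $-x$ is the two-sided inverse of $\tau$, the two maps compose to the identity, forcing both to be bijections. The final assertion $\V(P) \subseteq \Fz \sqcup (-\Fz)$ then follows because any vertex $v$ lies in some level; if $v$ were in $\Pz \cap u_F^\perp$, the bijections would exhibit $v$ as the midpoint $\tfrac{1}{2}\bigl((x+v) + (-x+v)\bigr)$ of two distinct lattice points of $\Pz$ (one in $\Fz$, one in $-\Fz$), contradicting that $v$ is a vertex.

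The main obstacle I anticipate is pinning down precisely that $u_F$ takes \emph{only} the three values $1, 0, -1$ on $\Pz$, rather than merely bounding the outer levels. The clean way is to argue by the slicing/counting already implicit in the setup: the three sets $\Fz$, $-\Fz$, and $\Pz \cap u_F^\perp$ are disjoint, and once the two translation maps are shown to be well-defined injections between consecutive levels with the reverse maps as inverses, one gets $\card{\Fz} = \card{\Pz \cap u_F^\perp} = \card{-\Fz}$; combined with $\card{\Pz} = 3^d$ and the fact that these are the only levels reachable, the count $3\card{\Fz} = 3^d$ closes off any room for a fourth level. I would double-check that no lattice point of $\Pz$ escapes these three slices by invoking $u_F(P) \le 1$ on one side and its central-symmetric counterpart $-u_F(P) \le 1$ on the other, so that $-1 \le u_F(y) \le 1$ for all $y \in P$, leaving integer values only in $\{-1,0,1\}$.
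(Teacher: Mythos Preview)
Your overall strategy matches the paper's: use Proposition~\ref{add} and central symmetry to slice $\Pz$ into three $u_F$-levels, get the bijections by translation by $\pm x$, and rule out vertices at level $0$ by writing them as midpoints. The bijection argument and the vertex argument are correct and essentially identical to the paper's.

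There is, however, a genuine gap in your first step. You repeatedly assume that $u_F$ takes \emph{integer} values on $\Z^d$: you only try to rule out $u_F(y)\le -2$, and in the last paragraph you conclude from $-1\le u_F(y)\le 1$ that the only possible values are $\{-1,0,1\}$. But $u_F$ is defined in the paper to lie in $(\Q^d)^*$, normalised so that $u_F(F)=1$; nothing so far guarantees that $u_F$ is a lattice functional, so a priori some $y\in\Pz$ could sit at a fractional level strictly between $0$ and $1$. (The case $u_F(y)\le -2$ you do discuss is already excluded by $-1\le u_F(P)$, so that part of your argument is vacuous.)

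The paper closes this gap without ever invoking integrality: suppose the decomposition fails. By central symmetry one may take $y\in\Pz$ with $y\notin\Fz$ and $u_F(y)>0$. Since $y\notin\Fz$, Proposition~\ref{add} gives $x+y\in\Pz$; but $u_F(x+y)=1+u_F(y)>1$, contradicting $u_F(P)\le 1$. This is exactly the Proposition~\ref{add} manoeuvre you gesture at in your first paragraph, applied to the correct case (a lattice point with $0<u_F(y)<1$) rather than to the impossible case $u_F(y)\le -2$. Once you replace the unjustified integrality claim by this one-line contradiction, your argument is complete and coincides with the paper's.
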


\begin{proof}
   Assume the first statement is wrong.  Then, by
   central-symmetry, there exists $y \in \Pz$, $y \notin \Fz$,
   such that $u_F(y) > 0$.  Proposition \ref{add} yields $x+y \in
   \Pz$.  However, $u_F(x+y) > 1$, a contradiction.

   The second statement follows by central-symmetry from
   Proposition \ref{add}.  For the last statement, note that,
   since $P$ is a lattice polytope, we have $\V(P) \subseteq \Pz$.
   So assume $y \in \V(P)$ with $y \in u_F^\perp$.  Then $y \in
   \ci(-x+y,x+y)$ with $-x+y \in -\Fz$ and $x+y \in \Fz$, a
   contradiction.
\end{proof}

Now, we can easily finish the proof of Theorem \ref{mink}.

\begin{proof}[Proof of Theorem \ref{mink}]
   By Theorem \ref{old}(1) it remains to prove the equality case.
   For this, let us first deal with the case of a lattice polytope
   $P$ as before.  The proof is by induction on the dimension $d$.
   We may assume $d \geq 2$.  By Corollary \ref{facet} there
   exists a facet $F \in \F(P)$ such that $F$ has an interior
   lattice point $x$.  Now, Corollary \ref{layer} actually shows
   that $-F=F-2x$ and $P = \conv(F,F-2x)$, i.e., $P$ is a prism
   over $F$.  Moreover, we see that $F-x = P \cap u_F^\perp$ is a
   $(d-1)$-dimensional centrally-symmetric lattice polytope (with
   respect to the lattice $\Z^d \cap u_F^\perp$) such that
   $\intr{(F-x)}=\{0\}$ and $\card{(F-x)_\Z} = 3^{d-1}$.  Hence,
   the induction hypothesis yields that $F-x$ is a standard lattice cube
   (with respect to a lattice basis $e_1, \ldots, e_{d-1}$).  It
   remains to show that $e_1, \ldots, e_{d-1},x$ is a lattice
   basis of $\Z^d$.  This follows, since any lattice point in
   $\Z^d$ can be successively translated via $e_1, \ldots,
   e_{d-1}, x$ into $P$, and $\Pz \subseteq \{\pm e_1 + \cdots +
   \pm e_{d-1} \pm x\}$ by Corollary \ref{layer}.

   In the general case, let $K \subseteq \R^d$ be a
   $d$-dimensional centrally-symmetric convex set with
   $\Kiz=\{0\}$ and $\card{\Kz} = 3^d$.  We define $P \deftobe
   \conv(\Kz)$.  This is a centrally-symmetric lattice polytope
   with $\Piz=\{0\}$ and $\card{\Pz} = 3^d$, in particular
   $\dim(P) = d$ by Theorem \ref{old}(1).  Therefore, $P$ is a
   standard lattice cube (with respect to a lattice basis $e_1, \ldots,
   e_d$).  Assume $P \subsetneq K$.  Then there exists $x \in K$,
   $x \notin P$.  Hence, there is a facet $F \in \F(P)$ such that
   $u_F(x) > 1$.  We may assume $F = e_1 + [-1,1] e_2 + \cdots +
   [-1,1] e_d$.  However, this implies that $e_1 \in
   \ci(0,\V(F),x)$, a contradiction to $\Kiz = \{0\}$.
\end{proof}


\def\cprime{$'$}
\providecommand{\bysame}{\leavevmode\hbox to3em{\hrulefill}\thinspace}
\providecommand{\MR}{\relax\ifhmode\unskip\space\fi MR }
\providecommand{\MRhref}[2]{%
  \href{http://www.ams.org/mathscinet-getitem?mr=#1}{#2}
}
\providecommand{\href}[2]{#2}

\end{document}